\newtheorem{theorem}{Theorem}[section]
\newtheorem{cor}[theorem]{Corollary}
\newtheorem{lemma}[theorem]{Lemma}
\newtheorem{conjecture}[theorem]{Conjecture}
\newcommand{\la}{\lambda}
\newcommand{\E}{\mathcal{E}}
\numberwithin{equation}{section}
\newcommand{\beq}[1]{\begin{equation}\label{#1}}
\newcommand{\eeq}{\end{equation}}
\title[On the size of the set $AA+A$]{On the size of the set $AA+A$}
\author[O. Roche-Newton, I. Z. Ruzsa, C. Y. Shen and I. D. Shkredov]{Oliver Roche-Newton, Imre Z. Ruzsa, Chun-Yen Shen and Ilya D. Shkredov}
\address{O. Roche-Newton: Johann Radon Institute for Computational and Applied Mathematics (RICAM), 69 Altenberger Stra{\ss}e, Linz, Austria }
\email{o.rochenewton@gmail.com }
\address{I. Z. Ruzsa: Alfr\'{e}d R\'{e}nyi Institute of Mathematics,
Hungarian Academy of Sciences, Budapest, Hungary }
\email{ruzsa.z.imre@renyi.mta.hu}
\address{Chun-Yen Shen: Department of Mathematics, National Taiwan University and National Center for Theoretical Sciences, Taiwan}
\email{chunyshen@gmail.com}
\address{I.D.~Shkredov: Steklov Mathematical Institute,ul. Gubkina, 8, Moscow, Russia, 119991
and 
IITP RAS, 
Bolshoy Karetny per. 19, Moscow, Russia, 127994 
and 
MIPT, 
Institutskii per. 9, Dolgoprudnii, Russia, 141701}
\email{ilya.shkredov@gmail.com }
\begin{document}

\begin{abstract}
It is established that there exists an absolute constant $c>0$ such that for any finite set $A$ of positive real numbers
$$|AA+A| \gg |A|^{\frac{3}{2}+c}.$$
On the other hand, we give an explicit construction of a finite set $A \subset \mathbb R$ such that $|AA+A|=o(|A|^2)$, disproving a conjecture of Balog.
\end{abstract} 
\maketitle
\section{Introduction}

Given a set $A\subset \mathbb R$, we define its \textit{sum set}, \textit{difference set}, \textit{product set} and \textit{ratio set} respectively as
\begin{align*}
A+A &:=\{a+b:a,b \in A\},
\\ A-A &:= \{a-b: a,b \in A \},
\\ AA &:=\{ab:a,b \in A\},
\\ A/A &:=\{a/b : a,b \in A , b \neq 0 \}.
\end{align*}
A famous conjecture of  Erd\H{o}s and Szemer\'{e}di states that, for all $\epsilon>0$ and for any finite set $A$ of integers, 
$$ \max \{|A+A|,|AA| \} \geq c(\epsilon)|A|^{2-\epsilon},$$
where the constant $c(\epsilon)$ is positive.\footnote{The conjecture is widely believed to be true for finite sets of real numbers, and indeed even complex numbers.} Although this conjecture remains wide-open, several partial results have been attained with gradually improving quantitative bounds. Solymosi \cite{solymosi} notably used a beautiful and elementary geometric argument to prove that, for any finite set $A \subset \mathbb R$,
\begin{equation}
\max \{|A+A|,|AA|\} \gg \frac{|A|^{4/3}}{\log^{1/3}|A|}.
\label{soly}
\end{equation}
Recently, a breakthrough for this problem was achieved by Konyagin and Shkredov \cite{KS}. They adapted and refined the approach of Solymosi, whilst also utilising several other tools from additive combinatorics and discrete geometry, in order to prove that
\begin{equation}
\max \{|A+A|,|AA|\} \gg |A|^{\frac{4}{3}+\frac{1}{20598} -o(1)}.
\label{KS}
\end{equation}
A refinement of the proof of \eqref{KS} by the same authors \cite{KS2} resulted in an improved exponent, and this was improved further in \cite{RSS} to $\frac{4}{3} + \frac{1}{1509}-o(1)$, which stands as the best estimate for the sum-product problem over real numbers at the time of writing. See \cite{KS}, \cite{KS2} and the references contained therein for more background on the sum-product problem.

In this paper, we consider the closely related problem of establishing lower bounds for the set
$$AA+A:=\{ab+c:a,b,c \in A\}.\footnote{
This kind of notation is used with flexibility throughout the paper; for example $AB+CD:=\{ab+cd:a \in A, b \in B, c \in C, d \in D \}$.}$$ It is believed, in the spirit of the Erd\H{o}s-Szemer\'{e}di conjecture, that $AA+A$ is always large. This belief was formalised in the following conjecture of Balog \cite{balog}.

\begin{conjecture}[Balog] \label{conj:balog} For any finite set $A$ of positive real numbers, $|AA+A| \geq |A|^2$.
\end{conjecture}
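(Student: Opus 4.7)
The natural starting point is an Elekes-type incidence argument. Consider the $|A|^2$ distinct lines $\mathcal{L} = \{y = ax + c : a, c \in A\}$ together with the point set $\mathcal{P} = A \times (AA+A)$: each line $y = ax + c$ contains the $|A|$ points $(b, ab+c)$ with $b \in A$, so the incidence count satisfies $I(\mathcal{P}, \mathcal{L}) \geq |A|^3$. The Szemer\'edi--Trotter theorem gives
$$I(\mathcal{P}, \mathcal{L}) \ll |\mathcal{P}|^{2/3} |\mathcal{L}|^{2/3} + |\mathcal{P}| + |\mathcal{L}| = |A|^2 |AA+A|^{2/3} + |A|\cdot|AA+A| + |A|^2,$$
and comparing the two yields the standard lower bound $|AA+A| \gg |A|^{3/2}$, which is the obvious baseline I would want to beat.

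To push the exponent from $3/2$ toward $2$, I would combine this with a sum-product type dichotomy. Writing $r(x) = |\{(a,b,c) \in A^3 : ab+c = x\}|$, Cauchy--Schwarz gives $|AA+A| \geq |A|^6 / \sum_x r(x)^2$, so it would suffice to prove the fourth-moment estimate $\sum_x r(x)^2 \leq |A|^4$. Expanding,
$$\sum_x r(x)^2 = \sum_d N(d)\, r_{A-A}(d), \qquad N(d) = |\{(a_1,b_1,a_2,b_2) \in A^4 : a_1b_1 - a_2 b_2 = d\}|,$$
and the hope would be to run a bootstrap: since $A \subset \R_{>0}$ we can pass to logarithms and bring in the Solymosi/Konyagin--Shkredov-type convex incidence tools behind \eqref{soly}--\eqref{KS}, arguing that large additive energy of $A$ forces $AA$ to be spread and thus $\sum_d N(d)^2$ small, and conversely that multiplicative structure collapses $E^+(A)$.

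The main obstacle --- and in my view the decisive one --- is the sharpness of the target. Szemer\'edi--Trotter is tight on integer grids, and the sum-product improvements typically gain only a tiny polynomial factor, nowhere near the full $|A|^{1/2}$ gap from $|A|^{3/2}$ up to $|A|^2$. Moreover the trivial inequality $|AA+A| \leq |A|^3$ leaves only a single factor of $|A|$ of slack, so the conjectured exponent $2$ is essentially as strong as is conceivable. I would therefore be quite sceptical that a proof is possible, and my backup plan would be to hunt for a counterexample: natural candidates are hybrid sets of the form $A = G \cdot H$ or $A = G + H$ with $G$ a short geometric progression and $H$ a short arithmetic progression of comparable size, engineered so that the products $AA$ cluster on a lower-dimensional additive grid and $|AA+A|$ is forced down to $o(|A|^2)$.
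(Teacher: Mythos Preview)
Your instinct is exactly right: the conjecture is \emph{false}, and the paper does not prove it but disproves it (Theorem~\ref{thm:construction}). So the first two-thirds of your proposal, the Elekes incidence baseline and the energy bootstrap toward $|AA+A|\ge |A|^2$, are aimed at a target that does not exist; no amount of sum-product technology will close the gap from $3/2$ to $2$, because the correct answer is strictly below $2$.

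Where your proposal falls short is in the counterexample hunt. The hybrid families $A=G\cdot H$ or $A=G+H$ with a short geometric progression $G$ and short arithmetic progression $H$ are natural first guesses, but nobody has made them yield $|AA+A|=o(|A|^2)$, and it is not clear they can: the product set $AA$ for such sets does not collapse additively in a controlled enough way. The construction in the paper is genuinely different and number-theoretic. One takes $q=\prod_{p<y}p$ with $q^2<n$, and lets $A$ consist of the first $n$ integers having unusually many prime factors below $y$ (i.e.\ $f(x)=\sum_{p\mid x,\,p<y}1$ large, in the Erd\H{o}s--Kac regime). Then every element of $AA$ has an abnormally large value of the related function $g(x)$ counting prime-power divisors below $y$; an exponential-moment computation shows such residues mod $q^2$ are rare, so $AA$ meets only a $(\log\log n)^{1-2\ln 2+o(1)}$ proportion of residue classes mod $q^2$. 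Since $AA+q^2A$ occupies the same residue classes as $AA$ and lives in $[1,O(n^2)]$, one gets $|AA+q^2A|=O\!\left(n^2(\log\log n)^{1-2\ln 2+o(1)}\right)$; rescaling by $q^{-2}$ gives a set $B$ of rationals with $|BB+B|$ of that size.

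The key idea you were missing is that the savings come not from algebraic structure (GP/AP hybrids) but from \emph{multiplicative} number-theoretic structure --- forcing $AA$ into few residue classes modulo a highly composite modulus --- and the saving is only doubly logarithmic, consistent with your correct intuition that one cannot beat $|A|^{2-\epsilon}$ by much.
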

In the same paper, the following result in that direction was proven:

\begin{theorem} \label{thm:old}
Let $A$ and $B$ be finite sets of positive real numbers. Then
$$|AB+A| \gg |A||B|^{1/2}.$$
In particular,
$$|AA+A| \gg |A|^{3/2}.$$
\end{theorem}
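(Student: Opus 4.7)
The plan is to prove the stronger statement $|AB+A| \gg |A||B|^{1/2}$ by an incidence argument in the spirit of Elekes, using the Szemer\'edi--Trotter theorem; the specialisation $B=A$ then yields $|AA+A| \gg |A|^{3/2}$.

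I would take the point set $P = A \times (AB+A) \subset \R^2$ (so $|P| = |A|\cdot|AB+A|$) and the line set $\mathcal{L} = \{y = bx + c : b \in B,\ c \in A\}$ (so $|\mathcal{L}| = |A||B|$, since distinct pairs $(b,c)$ produce lines with different slopes or different intercepts). For every $(a,b,c) \in A \times B \times A$, the point $(a, ab+c)$ belongs to $P$ because $ab+c \in AB+A$, and it lies on the line $y=bx+c$. Distinct triples $(a,b,c)$ yield distinct point-line incidences (the line determines $b$ and $c$ through its slope and intercept, and then the point's abscissa fixes $a$), so the incidence count satisfies $I(P,\mathcal{L}) \geq |A|^2|B|$.

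The Szemer\'edi--Trotter theorem gives $I(P,\mathcal{L}) \ll (|P||\mathcal{L}|)^{2/3} + |P| + |\mathcal{L}|$, and substituting yields
$$|A|^2|B| \ll \bigl(|A|^2|B|\cdot|AB+A|\bigr)^{2/3} + |A|\cdot|AB+A| + |A||B|.$$
Whichever term on the right dominates, one extracts a lower bound on $|AB+A|$: the first term produces the desired $|AB+A| \gg |A||B|^{1/2}$; the second produces the stronger $|AB+A| \gg |A||B|$; the third forces $|A|$ to be bounded by an absolute constant, in which case the inequality reduces to a trivial check.

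The argument is standard once the configuration is chosen, and I do not foresee a substantive obstacle — the whole proof is really just incidence geometry and case analysis. The only creative ingredient is the choice of configuration: letting $A$ play the role of both the $x$-coordinates of the points and the intercepts of the lines, while $B$ supplies the slopes, is what converts the trivial incidence count $|A|^2|B|$ into a nontrivial bound on $|AB+A|$ via Szemer\'edi--Trotter.
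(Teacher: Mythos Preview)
Your argument is correct and complete. Note, however, that the paper does not supply its own proof of this theorem: it attributes the result to Balog, whose original argument is an elementary Solymosi-style count of vector sums between lines through the origin in $A\times A$ (this is also the method sketched in the paper's Section~1.2). The paper explicitly remarks that the same bound follows from a direct application of the Szemer\'edi--Trotter theorem, citing Exercise~8.3.3 of Tao--Vu --- and that is precisely the route you have taken. So your proof is not the one Balog gave, but it is the alternative the paper itself points to; as the paper notes, your approach has the advantage of not requiring the positivity hypothesis on $A$ and $B$, and of generalising immediately to three distinct sets $|AB+C|$.
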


The proof of Theorem \ref{thm:old} uses a similar elementary geometric argument to that of \cite{solymosi}. In fact, one can obtain the same bound by a straightforward application of the Szemer\'{e}di-Trotter Theorem (see \cite[Exercise 8.3.3]{tv}).\footnote{This approach has the advantage that the sign restriction of the sets is removed. Another advantage of this Szemer\'{e}di-Trotter approach is that it has more flexibility in that it can give analogous bounds for sets of the form $AB+C$ where $A,B$ and $C$ are different sets. }

Some progress in this area was made by Shkredov \cite{shkredov}, who built on the approach of Balog in order to prove the following result: 

\begin{theorem} \label{thm:ilya}
For any finite set $A$ of positive real numbers,
\begin{equation}
|A/A+A| \gg  \frac{|A|^{\frac{3}{2}+\frac{1}{82}}}{\log^{\frac{2}{41}}|A|}.
\label{a+A/A}
\end{equation}
\end{theorem}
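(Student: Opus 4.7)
The plan is to sharpen the Szemer\'edi--Trotter approach underlying Theorem \ref{thm:old} by introducing a dyadic decomposition of representation multiplicities and then exploiting the multiplicative structure of $A/A$. To set up the incidence problem, I would associate to each pair $(a,c) \in A \times A$ the line $\ell_{a,c} \colon y = ax + c$, and take the point set $P = (1/A) \times (A/A + A)$. Every line $\ell_{a,c}$ contains at least $|A|$ points of $P$, namely the points $(1/b,\, a/b + c)$ for $b \in A$. A direct application of Szemer\'edi--Trotter to this configuration of $|A|^2$ lines and $|A| \cdot |A/A + A|$ points recovers the bound $|A/A + A| \gg |A|^{3/2}$ of Theorem \ref{thm:old}.

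To improve on the baseline I would switch to a weighted second-moment count. Let $r(x) := |\{(a,b,c) \in A^3 : a/b + c = x\}|$, so $\sum_x r(x) = |A|^3$ and Cauchy--Schwarz gives $|A/A + A| \geq |A|^6 / E$ with $E := \sum_x r(x)^2$. Here $E$ counts solutions to
\[
\frac{a_1}{b_1} - \frac{a_2}{b_2} = c_2 - c_1, \qquad a_i, b_i, c_i \in A,
\]
so it suffices to prove the energy estimate $E \ll |A|^{9/2 - 1/82} \log^{2/41}|A|$. I would partition the support of $r$ into dyadic level sets $P_\tau := \{x : \tau \leq r(x) < 2\tau\}$; the contribution of $P_\tau$ to $E$ is at most $\tau^2 |P_\tau|$, and after pigeonholing a single level dominates up to a $\log |A|$ loss.

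The gain over Szemer\'edi--Trotter comes from the fact that the $\tau$-popular points correspond to lines $\ell_{a,c}$ that are rich on $P$, and the effective slopes $a/b$ responsible for each incidence lie in the multiplicatively structured set $A/A$. Feeding this observation into the Konyagin--Shkredov sum-product machinery (or the associated bound $E^{\times}(A) \ll |A|^{3 - \delta}$) one can argue that a multiplicatively structured family of slopes cannot support too many rich incidences with a popular point set. Quantitatively this yields $|P_\tau| \ll |A|^{\kappa}/\tau^{\lambda}$ with exponents strictly better than those given by Szemer\'edi--Trotter alone in the critical range $\tau \approx |A|^3/|A/A + A|$. Summing over the $O(\log |A|)$ dyadic levels and optimising produces the required bound on $E$, and hence on $|A/A + A|$.

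The main obstacle will be the multi-parameter optimisation that produces exactly the exponents $1/82$ and $2/41$: one has to balance the popular slope count, the popular point count, the multiplicative energy saving, and the pure Szemer\'edi--Trotter incidence bound, typically through several carefully arranged applications of H\"older's inequality. A secondary difficulty is to use the positivity hypothesis $A \subset \R_{>0}$ at the right place, since the Balog-style elementary geometric argument that underpins the incidence step needs positivity to ensure that lines through the origin separate cleanly by slope, even though the sum-product input itself is sign-insensitive.
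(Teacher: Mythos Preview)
This theorem is not proved in the present paper; it is quoted from Shkredov's earlier work \cite{shkredov}. What the paper does contain (in commented-out form) is a proof of a stronger bound with exponent $\tfrac{1}{26}$, and that argument is a good proxy for the method behind Theorem~\ref{thm:ilya}. The approach there is geometric in the Solymosi--Balog style: one covers $A\times A$ by lines through the origin, dyadically pigeonholes to get roughly $|S_\tau|$ lines each carrying about $\tau$ points, and then counts vector sums $\mathcal A_{\lambda_i}+\mathcal A_{\lambda_{i+1}}\cdot\Delta(A^{-1})\subset (A/A+A)^2$. The first pass yields an upper bound on the quantity $d(A)=\min_C |AC|^2/(|A||C|)$ in terms of $|A/A+A|$; the second pass uses the Konyagin--Shkredov clustering idea, with Szemer\'edi--Trotter (via Lemma~\ref{thm:ST}-type bounds) controlling the overlap error $\mathcal E(\lambda_1,\lambda_2,\lambda_3,\lambda_4)$. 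Combining the two passes and optimising the cluster size $M$ gives the exponent gain. The positivity of $A$ enters exactly where you suspect, in the vector-sum step.

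Your plan is genuinely different in architecture: you go through the additive energy $E=\sum_x r(x)^2$ and aim to beat the Szemer\'edi--Trotter bound on a popular level set $P_\tau$ by invoking sum-product input on the slope set. The difficulty is that your proposal stops precisely at the point where the work begins. You assert that ``feeding this observation into the Konyagin--Shkredov sum-product machinery \ldots\ yields $|P_\tau|\ll |A|^\kappa/\tau^\lambda$ with exponents strictly better than Szemer\'edi--Trotter,'' but you do not identify which sum-product estimate applies, to which sets, or how the multiplicative structure of the slopes $a/b$ actually constrains the rich-line count. The slopes already range over all of $A/A$, which can be as large as $|A|^2$, so ``multiplicatively structured'' is not an obvious lever here; one needs either a bound on $d(A)$ or a third-moment/eigenvalue argument to extract a saving, and neither is set up in your sketch. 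As written this is a plan rather than a proof: the exponents $\tfrac{1}{82}$ and $\tfrac{2}{41}$ are targets, not outputs of your argument.
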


The first main result of this paper is the following improvement on Theorem \ref{thm:old}: 
\begin{theorem} \label{thm:main} There is an integer $n_0$ and an absolute constant $c>0$ such that for every finite set $A$ of positive reals with $|A| \geq n_0$,
$$|AA+A| \gg |A|^{\frac{3}{2}+c}.$$
\end{theorem}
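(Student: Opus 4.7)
The strategy, following Shkredov's approach to Theorem~\ref{thm:ilya}, is to assume for contradiction that $M := |AA+A| \leq |A|^{3/2+c}$ for a small constant $c>0$ and to derive a contradiction from the best known sum-product bound \eqref{KS}. The key initial observation is that for every $t \in A$ one has $tA \subseteq AA$ and hence $|tA + A| \leq M$. This packs the $|A|$ affine-multiplicative dilates $\{tA + A : t \in A\}$ inside a common set of size $M$. Applying the Ruzsa triangle inequality with $A$ as the pivot then gives $|sA - tA| \leq M^2/|A|$ for every $s,t \in A$, and iteration yields analogous control for longer signed sums of dilates $t_1 A \pm \cdots \pm t_k A$. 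Informally, $A$ is additively close to every one of its multiplicative dilates by an element of $A$.

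To convert this into an energy estimate, apply Cauchy--Schwarz to the representation function $r(x) = |\{(a,b,c) \in A^3 : a + bc = x\}|$, which satisfies $\sum_x r(x) = |A|^3$. This yields the lower bound
$$\sum_x r(x)^2 \geq \frac{|A|^6}{M}.$$
The left-hand side counts solutions $(a_1,b_1,c_1,a_2,b_2,c_2) \in A^6$ of $a_1 + b_1 c_1 = a_2 + b_2 c_2$. Rewriting this as $b_2 c_2 - b_1 c_1 = a_1 - a_2$ and interpreting it as incidences between the point set $A \times A$ (in the coordinates $(c_1,c_2)$) and the family of lines
$$c_2 = (b_1/b_2)\, c_1 + (a_1 - a_2)/b_2,$$
indexed by $(b_1,b_2,a_1-a_2) \in A \times A \times (A-A)$, the Szemer\'edi--Trotter theorem, together with a dyadic pigeonhole on the representation functions of $A/A$ and $A-A$, produces an upper bound of the form
$$\sum_x r(x)^2 \ll |A|^\alpha\, |A/A|^\beta\, |A-A|^\gamma \log^{O(1)} |A|$$
for explicit exponents $\alpha,\beta,\gamma$.

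Combining the two bounds forces at least one of $|A-A|$, $|A/A|$ (and, via the Pl\"unnecke--Ruzsa relations coming from the first step together with $|AA| \leq M$, also $|A+A|$ and $|AA|$) to be of order $|A|^{3/2 + O(c)}$ whenever $M \leq |A|^{3/2+c}$. Feeding these inequalities into the Konyagin--Shkredov sum-product bound \eqref{KS} (or the refinement in \cite{RSS}) produces a contradiction provided $c$ is smaller than an absolute constant determined by the sum-product exponent, yielding the stated inequality.

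The principal difficulty is the energy upper bound. Unlike the ratio set $A/A$ handled in Theorem~\ref{thm:ilya}, where the built-in symmetry guarantees that each element admits many representations as $a/b$, the product set $AA$ has highly irregular multiplicities. Passing to a subset of $A$ on which $AA$ has nearly uniform representation counts is unavoidable and costs logarithmic factors throughout, so the sum-product gain in the final step must be robust enough to absorb these losses. Balancing the energy exponents $\alpha,\beta,\gamma$ against the sum-product exponent in \eqref{KS} is precisely what pins down the (small but explicit) admissible value of $c$.
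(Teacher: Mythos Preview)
Your proposal is a strategy outline rather than a proof, and the central step does not close. The concrete gap is the endgame: from the energy lower bound $\sum_x r(x)^2 \geq |A|^6/M$ and any Szemer\'edi--Trotter-type upper bound you can reasonably extract here, you obtain information of the strength $E_+(A) \gtrsim |A|^{5/2-c}$ together with $|AA| \leq M \leq |A|^{3/2+c}$ (hence $E_*(A) \gtrsim |A|^{5/2-c}$). But having both energies above $|A|^{5/2-c}$ is \emph{not} known to be impossible; it does not contradict \eqref{KS}, nor Theorem~\ref{thm:RSS}, which only controls energies of \emph{subsets} $X,Y$ and says nothing preventing $E_+(A)$ and $E_*(A)$ from both being $\approx |A|^{5/2}$. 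Your Ruzsa-triangle step likewise yields only $|A\pm A| \leq M^2/|A| \leq |A|^{2+2c}$, which is weaker than the trivial bound and gives no leverage against \eqref{KS}. You leave the exponents $\alpha,\beta,\gamma$ unspecified and do not carry out the balancing you describe; and the ``principal difficulty'' you flag --- the lack of symmetry in $AA$ that made Shkredov's $A/A+A$ argument work --- is acknowledged but not addressed. As written, there is no mechanism forcing $|A+A|$ (or $|A/A|$, or $|A-A|$) below $|A|^{4/3+1/1509}$, so the contradiction with \eqref{KS} never materialises.

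The paper's argument is structurally different and does not proceed by contradiction with a global sum-product bound. It works geometrically with the Solymosi/Balog picture of $A\times A$ covered by lines through the origin, and uses the Konyagin--Shkredov \emph{clustering} refinement: rather than summing only over neighbouring lines, one groups $M$ consecutive lines and controls the overcounting via equations of the form \eqref{STsetuppp}. The decisive input --- which has no analogue in your sketch --- is the asymmetric sum-product estimate of Theorem~\ref{Shk:assym}, applied with $B=A_\lambda$ (a fibre of size $\tau$, possibly as small as $|A|^{1/25}$). This dichotomy says that either $|AA_\lambda|$ is noticeably larger than $|A|$ (in which case Balog's original argument already wins) or $E_+(A,\alpha A_\lambda)$ is noticeably below the trivial $|A|\tau^2$ for all $\alpha$ (in which case the cluster error terms are small enough to push $M$ up to $\tau^{c'}$). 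A separate argument using Lemma~\ref{bigratio} and Theorem~\ref{thm:RSS} handles the case when $|A/A|$ is so large that $\tau$ is tiny. It is precisely this asymmetric estimate that replaces the $A/A$-symmetry you are missing.
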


The constant $c$ is tiny. In the proof we show that we can take $c=2^{-222}$, although in the interests of simplifying the presentation we do not make an attempt to optimise the value of $c$ in the proof. The constant $n_0$ on the other hand is very large. 

On the other hand, we give a construction which disproves the above conjecture of Balog.

\begin{theorem} \label{thm:construction}
There exists an absolute constant $C$ such that for all $n \in \mathbb N$ there exists $A \subset \mathbb Q$ such that $|A| \geq n$ and 
$$|AA+A| \leq C \frac{|A|^2}{ (\log \log |A|)^{2 \ln 2-1 +o(1)}}.$$
\end{theorem}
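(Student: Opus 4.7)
The plan is to take $A=\{1,2,\ldots,N\}$, regarded as a subset of $\mathbb{Q}$, and to estimate $|AA+A|$ using known results on the anatomy of the multiplication table. The starting point is the elementary identification
$$
AA+A \;=\; \bigcup_{m\in AA}\{m+1,m+2,\ldots,m+N\},
$$
where $AA=\{ij:1\le i,j\le N\}$ is the classical multiplication table. Thus $|AA+A|$ is the cardinality of a union of $N$-length intervals indexed by $AA$; this is trivially at most $N^2+N\asymp|A|^2$, and the task is to extract a $(\log\log N)^{-(2\ln 2-1+o(1))}$ saving from the heavy overlaps among these covering intervals.

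I would proceed dyadically. Partition $[1,N^2]$ into scales $I_j=(N^2 2^{-j-1},N^2 2^{-j}]$, and on each scale count the covering multiplicity $k(s)=|\{m\in AA:\,s-N\le m\le s-1\}|$. Since $\sum_{s} k(s)= N\,|AA|$, any lower bound $k(s)\ge K_j$ holding for most $s\in(AA+A)\cap I_j$ yields
$$
|(AA+A)\cap I_j| \;\le\; \frac{N\,|AA\cap I_j|}{K_j},
$$
improving the trivial estimate by a factor of $K_j$. For $s\in I_j$, the condition $s\in AA+A$ forces some $m\in[s-N,s-1]$ to have a divisor in the interval $[m/N,N]$, of multiplicative ratio $\asymp 2^{j/2}$; Ford's sharp theorem on integers with a divisor in a prescribed interval controls $|AA\cap I_j|$ in the dyadic regime.

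The crux of the proof is showing that a typical $m\in AA$ admits many factorisations $m=ij$ with $i\approx\sqrt{m}$, so that several nearby products crowd into $[m-N,m+N]$ and force $k(s)$ to be large on a positive-density set. This divisor-clustering phenomenon is precisely quantified by the Erd\H{o}s--Ford--Tenenbaum theory: Hooley's $\Delta$-function together with Ford's refinement of the multiplication-table estimate delivers $K_j\asymp(\log\log N)^{2\ln 2-1-o(1)}$ on each scale. Summing the resulting bounds for $|(AA+A)\cap I_j|$ over the $O(\log N)$ dyadic scales gives
$$
|AA+A| \;\le\; C\,\frac{N^2}{(\log\log N)^{2\ln 2-1+o(1)}},
$$
as desired with $|A|=N$, and in particular $|A|\ge n$ whenever $N\ge n$.

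The main obstacle is the divisor-clustering input producing the precise exponent $2\ln 2-1$, which requires the sharpest available form of the Erd\H{o}s--Ford--Tenenbaum divisor-in-an-interval estimates. Once this quantitative ingredient is quoted as a black box, the structural identity, dyadic decomposition and covering-multiplicity bound together give the construction; no further combinatorial cleverness in the choice of $A$ is needed.
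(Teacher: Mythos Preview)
Your construction cannot work: for $A=\{1,\ldots,N\}$ one has $|AA+A|\ge N^2=|A|^2$, so no saving of the form $(\log\log N)^{-(2\ln 2-1+o(1))}$ is possible. The reason is the argument of Shakan recalled in the introduction of the paper: with $a_{\max}=N$, the map $(i,j)\mapsto Ni+j$ is injective on $[1,N]^2$ (since $|j-j'|\le N-1<N$ forces $i=i'$), hence $|AA+A|\ge |NA+A|=N^2$. More generally, this shows $|AA+A|\ge|A|^2$ for \emph{every} finite set of positive integers, so any integer construction is doomed; the theorem genuinely requires $A\subset\mathbb{Q}\setminus\mathbb{Z}$. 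All of your subsequent analysis --- the dyadic decomposition, the covering-multiplicity bound, and the appeal to Ford's divisor-in-an-interval estimates --- is therefore aimed at a conclusion that is false for the set you chose.

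The paper's construction exploits precisely this. It builds an integer set $A\subset[1,3n]$ and an integer $m=q^2$ (with $q$ a primorial) and bounds $|AA+mA|$ rather than $|AA+A|$; the rational example is then $B=m^{-1}A$, for which $|BB+B|=|AA+mA|$. The point of choosing $m=q^2$ is that $AA+mA$ is controlled by the residue classes of $AA$ modulo $q^2$, and $A$ is chosen so that every $a\in A$ has many small prime factors, forcing $g(ab)$ to be large for an auxiliary additive function $g$ that is periodic mod $q^2$. An exponential-moment (first-moment of $2^g$) computation then shows $AA$ meets only a $(\log\log n)^{1-2\ln 2+o(1)}$ proportion of residue classes mod $q^2$, which gives the bound. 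The exponent $2\ln 2-1$ arises here from the moment calculation, not from the Erd\H{o}s--Ford--Tenenbaum theory.
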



Although this disproves the full version of Conjecture \ref{conj:balog}, it is plausible that a weaker conjecture that $|AA+A| \gg |A|^{2- \epsilon}$ for all $\epsilon >0$ holds.

We note that the corresponding problem for sets of integers is resolved, up to constant factors, thanks to a nice argument of George Shakan\footnote{See http://mathoverflow.net/questions/168844/sum-and-product-estimate-over-integers-rationals-and-reals.}. Indeed, if $A$ is a finite set of positive integers and $a_{max}$ is the largest element of $A$, then there are no non-trivial solutions to the equation
$$a+a_{max}b=c+a_{max}d$$
such that $a,b,c,d \in A$. This is because such a solution would imply that
$$a_{max}=\frac{c-a}{b-d},$$
but this is a contradiction, since $c-a<a_{max}$ and $b-d \geq 1$ (we may assume without loss of generality that $b>d$). We therefore have
\begin{equation}
|AA+A| \geq |a_{max}A+A| \geq |A|^2.
\label{integers}
\end{equation}
In fact, the only property of the integers used here is that the difference between any two distinct integers has absolute value at least $1$. One can therefore generalise \eqref{integers} to sets of real numbers which are ``well-spaced"; that is, the bound \eqref{integers} holds for any set $A$ of real numbers such that the difference between two distinct elements of $A$ has absolute value at least $1$.

The set $AA+A$ is just one example of a set defined by a combination of additive and multiplicative operations. Such sets have been well studied in recent years; for example,  in \cite{MORNS} and \cite{MORNS2} the dual problem for the set $A(A+A)$ was considered, and it was proven in \cite{MORNS2} that
$$|A(A+A)|\gg |A|^{\frac{3}{2}+\frac{5}{242}-o(1)}.$$
For sets formed from more variables, quantitatively better bounds, in many cases  optimal up to constant and logarithmic factors, have been established. See \cite{MORNS}, \cite{MORNS2} and the references contained therein for more on such variations on the sum-product problem.

\subsection{Notation and preliminary results}

Throughout the paper, the standard notation
$\ll,\gg$ is applied to positive quantities in the usual way. Saying $X\gg Y$ or $Y \ll X$ means that $X\geq cY$, for some absolute constant $c>0$.  The expression $X \approx Y$ means that both $X \gg Y$ and $X \ll Y$ hold. The notation $\lesssim$ and $\gtrsim$ is used to suppress both constant and logarithmic factors. To be precise, the expression $X\gtrsim Y$ or $Y \lesssim X$ means that $X\gg Y/(\log X)^c$, for some absolute constant $c>0$.  All logarithms have base $2$.

Given two finite sets $A,B \subset \mathbb R$, the \textit{additive energy of $A$ and $B$}, denoted $E_+(A,B)$, is the number of solutions to the equation
$$a+b=a'+b'$$
such that $a,a' \in A$ and $b,b' \in B$. The \textit{multiplicative energy of $A$ and $B$} is the number of solutions to the equation $ab=a'b'$ such that $a,a' \in A$ and $b,b' \in B$. Note that $E_+(A,B), E_*(A,B) \leq  \min \{ |A|^2|B|, |A||B|^2, (|A||B|)^{3/2} \}$. The notation $E_+(A,A)$ is shortened to $E_+(A)$, and likewise for multiplicative energy. The following standard lower bounds for additive and multiplicative energy each follow from a simple application of the Cauchy-Schwarz inequality.
\begin{equation}
E_+(A) \geq \frac{|A|^4}{|A + A|} ,\,\,\,\,\,\,\,\,E_+(A) \geq \frac{|A|^4}{|A - A|},\,\,\,\,\,\,\,\,E_*(A) \geq \frac{|A|^4}{|A A|} ,\,\,\,\,\,\,\,\,E_*(A) \geq \frac{|A|^4}{|A /A|}.
\label{CSclassic}
\end{equation}

During the proof of the main theorem we will need to take advantage of a situation in which $A$ has very large additive energy. The classical tool in additive combinatorics for this situation is the Balog-Szemer\'{e}di-Gowers Theorem, which says that if $E_+(A)$ is large then $A$ contains a large subset with small sum set. However, recent progress, particularly in \cite{BW}, \cite{KS2} and \cite{RSS}, has led to the development of different tools which are more effective than the Balog-Szemer\'{e}di-Gowers Theorem in the sum-product setting. In particular we will use the following result, which is Theorem 12 in \cite{RSS}.

\begin{theorem} \label{thm:RSS}
Let $A \subset \mathbb R$. Then there exist $X,Y \subset A$ such that $|X|,|Y| \geq |A|/3$ and
$$E_+(X)^3 \cdot E_*(Y) \lesssim |A|^{11}.$$
\end{theorem}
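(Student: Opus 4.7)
My plan is to combine a Szemer\'{e}di-Trotter style energy inequality with a peeling argument in the spirit of Balog-Wooley.

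The first step is to establish, by a weighted incidence geometry argument, an inequality of the form $E_+(A)^3 \cdot E_*(A) \lesssim |A|^{11}$ for $A$ itself (up to logarithmic factors). I would set up a point-line configuration in the plane built from the sum and product structure of $A$: for instance, points $(A+A) \times (AA)$ weighted by the representation functions $r^+_A$ and $r^*_A$, together with lines whose slopes and intercepts range over $A$ and $A \cdot A$ respectively. The third-power exponent on $E_+$ arises naturally from a third-moment computation (essentially counting $E_3^+(A) = \sum_s r^+_A(s)^3$ via incidences), and the bound $|A|^{11}$ is what the Szemer\'{e}di-Trotter incidence bound yields after unweighting and rearranging, given that the total incidence count scales like $|A|^3$ times the relevant representation function.

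If this unconditional inequality held as stated one could take $X = Y = A$ and finish immediately. In general one obtains only a weaker form, and the second step is a peeling argument to extract large subsets where the cleaner inequality holds. For each $a \in A$, define the additive and multiplicative popularities $w_+(a), w_*(a)$, measuring the contribution of $a$ to the respective energies. By Markov's inequality, the elements whose popularity exceeds a sufficiently large constant multiple of the average on either side form a bounded fraction of $A$; discarding them leaves subsets $X$ (with controlled additive contribution) and $Y$ (with controlled multiplicative contribution), each of size at least $|A|/3$. A direct energy calculation, combined with the global bound from step one, then yields $E_+(X)^3 E_*(Y) \lesssim |A|^{11}$, with the logarithmic slack absorbed into the $\lesssim$ notation.

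The main obstacle is calibrating the incidence argument in step one to produce exactly the exponents $3$ and $11$. Naive applications of Szemer\'{e}di-Trotter to the standard Elekes or Solymosi configurations yield inequalities with different exponent shapes (such as $E_+(A) \cdot E_*(A) \lesssim |A|^{\text{something}}$, or bounds on $|A+A| \cdot |AA|$ directly); obtaining the asymmetric $E_+^3 E_*$ form requires a careful dyadic decomposition by additive and multiplicative popularity and iterating the incidence bound within each dyadic level, so that the cube on $E_+$ emerges from cubing a single dyadic layer before summing back. The peeling step, once the global bound is in hand, is a routine Markov-type pigeonholing and should not present significant difficulty.
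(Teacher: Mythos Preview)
The paper does not prove this theorem; it is quoted as Theorem~12 of \cite{RSS} and used as a black box. So there is no proof here to compare against, only the original argument of Rudnev--Shkredov--Stevens.

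Your proposal is in the right spirit (Szemer\'edi--Trotter plus a Balog--Wooley--type extraction) but has a real gap. Step one, as you yourself note, does not actually give $E_+(A)^3 E_*(A)\lesssim |A|^{11}$ for the full set $A$; you then say one obtains ``only a weaker form'' without specifying what that weaker form is. The peeling step is then supposed to use ``the global bound from step one'', which is precisely the unspecified object. A single Markov pass on elementwise popularities cannot repair this: removing the elements of high additive popularity bounds each surviving element's contribution to $E_+(A)$, but it does not bound $E_+(X)$ beyond the trivial $E_+(X)\le E_+(A)$, and it certainly does not manufacture the cube on $E_+$ or couple it to $E_*$ of a \emph{different} subset $Y$.

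The actual RSS argument is an \emph{iterative} peeling, not a one-shot Markov step. At each stage one either has small additive energy on the current set (and stops), or one extracts a large subset on which a single popular difference (or sum) is heavily represented; Szemer\'edi--Trotter then forces that structured piece to have small multiplicative energy, and it is set aside into $Y$. Iterating and summing the multiplicative energies of the extracted pieces against the cubed additive energy of the residual set $X$ is what produces the exponents $3$ and $11$. The key missing idea in your sketch is this per-step dichotomy and the bookkeeping across iterations; without it, there is no mechanism linking $E_+(X)^3$ to $E_*(Y)$ for distinct subsets.
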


We will also need the Ruzsa Triangle Inequality, which we state in the following form:
\begin{lemma}[Ruzsa Triangle Inequality]
Let $G$ be an additive group and let $A,B,C \subset G$ be finite sets. Then
$$|A+B||C| \leq |A+C||B+C|.$$
\end{lemma}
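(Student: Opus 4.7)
The plan is to prove the stated inequality by recognising it as (a rearrangement of) the Pl\"unnecke--Ruzsa sumset inequality
\[
|X|\cdot|Y+Z|\leq|X+Y|\cdot|X+Z|,
\]
which becomes the statement of the lemma on taking $X=C$, $Y=A$, $Z=B$. I would first note that the classical injection trick behind the \emph{difference} version $|A-B||C|\leq|A-C||B-C|$ does not carry over to sums: the candidate map $(s,c)\mapsto(a(s)+c,b(s)+c)$, where $s=a(s)+b(s)$ is a fixed representation of $s\in A+B$, recovers only $a(s)-b(s)$ and $s+2c$ from its image, which in a group with $2$-torsion is insufficient to recover $(s,c)$. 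So I would abandon the injection route and follow Petridis's inductive proof.

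The first step is to choose a non-empty $X\subseteq C$ minimising the ratio $K:=|X+A|/|X|$; since $C$ itself is a candidate, $K\leq|A+C|/|C|$. The heart of the argument is the inductive claim that for every finite set $Z$,
\[
|X+A+Z|\leq K\,|X+Z|.
\]
The base case $|Z|=1$ is just the definition of $K$. For the inductive step I would peel off a single element $z$ from $Z$ and partition $X$ according to whether the translate $x+A+z$ is already contained in $X+A+(Z\setminus\{z\})$; the genuinely new contribution comes from a subset $X'\subseteq X$, and the minimality of $K$ over subsets of $C$ is exactly what bounds $|X'+A|$ and hence the number of new elements that arise. This inductive step is the main technical obstacle of the proof.

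Once the inductive claim is established, applying it with $Z=B$ gives $|X+A+B|\leq K\,|X+B|$. Two trivial observations finish the job: for any fixed $x_0\in X$, $A+B\subseteq (A+B+X)-x_0$, hence $|A+B|\leq|A+B+X|$; and $X\subseteq C$ gives $|X+B|\leq|B+C|$. Chaining produces
\[
|A+B|\leq|A+B+X|\leq K\,|X+B|\leq \frac{|A+C|}{|C|}\cdot|B+C|,
\]
which on multiplying through by $|C|$ is the desired inequality.
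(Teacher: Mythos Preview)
Your proof is correct. The paper, however, does not prove this lemma at all: it is quoted as a standard preliminary result and used later as a black box, so there is no ``paper's own proof'' to compare against.

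A couple of remarks on what you wrote. Your diagnosis of why the naive injection $(s,c)\mapsto(a(s)+c,\,b(s)+c)$ fails is slightly understated: even in a torsion-free group, recovering $a(s)-b(s)$ does not determine $s$ (two different elements of $A+B$ may well have chosen representations with the same difference), so the map need not be injective anywhere. The $2$-torsion issue you mention is an additional obstruction on top of that. Either way, your decision to switch to Petridis's argument is the right one.

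The Petridis argument you outline is the standard modern route to this ``sum'' form of the triangle inequality (often called the Pl\"unnecke--Ruzsa inequality). The inductive step you describe---partitioning $X$ into those $x$ with $x+A+z\subseteq X+A+Z'$ and the complementary set $X'$, then using the minimality of $K$ over all non-empty subsets of $C$ (hence over $X\setminus X'$) to bound the new contribution---is exactly Petridis's trick, and the chaining at the end is clean. If you want to be fully self-contained you might write out the inductive step explicitly, since that is where all the content lies, but as a sketch this is sound.
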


During the proof of the main theorem we will need to use some existing sum-product type estimates. The first such result is due to Balog \cite{balog}.
\begin{lemma} \label{balogaa+aa}
Let $A$ and $X$ be finite sets of positive real numbers. Then
$$|AX+AX| \gg |A/A|^{1/2}|X|.$$
\end{lemma}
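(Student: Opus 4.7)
The plan is to prove Lemma~\ref{balogaa+aa} by a direct adaptation of Solymosi's geometric argument (as used in his proof of \eqref{soly}) to the setting where the product set is carried by $X$ rather than $A$. The positivity hypothesis on $A$ will play a role exactly as in Solymosi's original argument: it guarantees that all relevant points lie in the open first quadrant, so that Minkowski sums of points on two lines through the origin lie strictly between them in angular coordinates.

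Concretely, I would proceed as follows. For each ratio $r \in A/A$, fix one representation $r = a_r/b_r$ with $a_r, b_r \in A$, and define
\[
L_r := \{(b_r x, a_r x) : x \in X\} \subseteq AX \times AX.
\]
Since the elements of $X$ are positive and distinct, $|L_r| = |X|$, and $L_r$ lies entirely on the ray through the origin of slope $r$. Next, order the elements of $A/A$ as $r_1 < r_2 < \cdots < r_m$ with $m = |A/A|$, and consider the Minkowski sums $L_{r_i} + L_{r_{i+1}}$ for $i = 1, \ldots, m-1$. Each such sum is contained in $(AX + AX) \times (AX + AX)$.

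The two key observations are standard for this type of argument. First, because $L_{r_i}$ and $L_{r_{i+1}}$ lie on distinct lines through the origin, the map $(p,q) \mapsto p+q$ from $L_{r_i} \times L_{r_{i+1}}$ is injective, so $|L_{r_i} + L_{r_{i+1}}| = |X|^2$. Second, because every point involved has positive coordinates, if $p \in L_{r_i}$ and $q \in L_{r_{i+1}}$, then the slope of $p + q$ is a strictly positive convex combination of $r_i$ and $r_{i+1}$, hence lies strictly between them. Thus the sets $L_{r_i} + L_{r_{i+1}}$ for different consecutive $i$ are pairwise disjoint subsets of $(AX+AX)^2$, giving
\[
|AX + AX|^2 \;\geq\; \sum_{i=1}^{m-1} |L_{r_i} + L_{r_{i+1}}| \;=\; (|A/A|-1)\,|X|^2,
\]
from which $|AX + AX| \gg |A/A|^{1/2}|X|$ follows at once.

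There is no real obstacle here beyond carefully verifying the angular disjointness in the positive quadrant; the argument is essentially identical in shape to Solymosi's proof of \eqref{soly}, with $X$ playing the role that $A$ played there and the ratios $A/A$ providing the slopes of the lines through the origin.
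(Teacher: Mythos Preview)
Your proposal is correct and follows precisely the Balog--Solymosi argument that the paper has in mind: the paper does not prove Lemma~\ref{balogaa+aa} itself but cites it from \cite{balog}, and your setup (one fixed representative per ratio, dilates by $X$ along each line, sums from consecutive lines falling into disjoint angular sectors) is exactly the argument used there and is also the skeleton of the paper's own proof of the refinement Lemma~\ref{bigratio}. Your sets $L_r$ coincide with the paper's $\Delta(X)\cdot(a_\lambda,\lambda a_\lambda)$, so the approaches are essentially identical.
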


The second sum-product result that is utilised is a recent result of Shkredov \cite{Sh3}. It is an essential ingredient in the proof of the main theorem in this paper. The key feature of this result is that it gives non-trivial bounds for the case when $B$ is much smaller than $A$ (although the estimates become quantitatively weaker with exponential speed as $B$ becomes smaller). 

\begin{theorem} \label{Shk:assym} There is an absolute constant $C_*$  such that the following statement holds. For any finite sets $A,B \subset \mathbb R$, any $ \alpha \in \mathbb R \setminus \{0\}$ and any $k \geq 1$ such that
$$|B|^{\frac{k}{8}-\frac{1}{4}+\frac{1}{4(k+4)}} \geq |A| C_*^{\frac{k+4}{4}} \log^k(|A||B|)$$
we have
$$|AB|+\frac{|A|^2|B|^2}{E_+(A,\alpha B)} \geq \frac{|A||B|^{\frac{1}{4(k+4)2^k}}}{16}.$$
\end{theorem}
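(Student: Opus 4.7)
The estimate is a sum-product dichotomy: either the product set $AB$ is moderately large, or the additive energy $E_+(A,\alpha B)$ is small enough that (via Cauchy--Schwarz) $|A+\alpha B| \geq |A|^2|B|^2/E_+(A,\alpha B)$ is moderately large. The exponential tower $2^k$ in the exponent of $|B|$ and the accumulating factor $C_*^{(k+4)/4}$ strongly suggest an iterative bootstrap: establish a base estimate, then use it to drive an inductive improvement whose depth is indexed by $k$.

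\textbf{Base case.} Start from a Szemer\'edi--Trotter incidence count applied to the point set $A \times AB$ together with the pencils of lines $\{y = b(x-t) : b \in B\}$ parametrised by translations $t$ drawn from $\alpha B$. This yields a third-moment estimate of the shape
\[
E_+(A,\alpha B)^{3} \;\lesssim\; |AB|^{2}\,|A|^{4}\,|B|^{3}\,\log^{O(1)}(|A||B|),
\]
which, combined with the trivial bound $E_+(A,\alpha B) \leq |A|^2|B|$, produces the desired inequality for the smallest $k$ for which the hypothesis can hold. This base step supplies the constant $C_*$ and the first logarithmic loss.

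\textbf{Inductive step.} Assume the statement at level $k$. Suppose, towards a contradiction, that at level $k+1$ both $|AB|$ and $|A|^2|B|^2/E_+(A,\alpha B)$ are smaller than $|A||B|^{\epsilon_{k+1}}/16$, where $\epsilon_{k+1}=1/(4(k+5)2^{k+1})$. A dyadic pigeonhole on the representation function $r_{A-\alpha B}$ isolates a scale $\Delta$ and a subset $B'\subseteq B$ of proportional size on which the additive correlation with $A$ is uniformly large. Using the Pl\"unnecke--Ruzsa inequality (together with the Ruzsa triangle inequality already stated above) to transfer smallness of $AB$ to smallness of the auxiliary sets arising in the energy, and then Cauchy--Schwarz to pass between $E_+(A,\alpha B')$ and its square, one applies the inductive hypothesis to the pair $(A,B')$. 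The quadratic loss in Cauchy--Schwarz is what doubles the denominator at each stage and is responsible for the factor $2^k$ in $\epsilon_k=1/(4(k+4)2^k)$.

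\textbf{Propagating the hypothesis and main obstacle.} The main difficulty is the tight bookkeeping across the iteration. One must verify that the pigeonholed subset $B'$ still satisfies the quantitative hypothesis of the theorem at level $k$: this is precisely what forces the exponent $k/8-1/4+1/(4(k+4))$ to appear, as it is exactly the shape that is preserved by a single induction step (the $k/8$ comes from the accumulated $2^k$-type loss reindexed, the $-1/4$ accounts for the Pl\"unnecke step, and the $1/(4(k+4))$ is the slack needed to close the bootstrap). In parallel, the constant $C_*^{(k+4)/4}$ must absorb the Pl\"unnecke--Ruzsa losses and the $\log^k$ factor must absorb the dyadic pigeonholing at each of the $k$ levels. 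The crux of the argument is verifying that the tiny gain $|B|^{\epsilon_k}$ produced at each iteration is not entirely devoured by these accumulated losses; this delicate calibration is what makes the hypothesis so intricate, and is where the bulk of the technical work lies.
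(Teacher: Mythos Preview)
The paper does not prove this theorem. It is quoted (in slightly simplified form) from Shkredov's paper \cite{Sh3}, where it is identified as Corollary~33, and is used here purely as a black box in the proof of Theorem~\ref{thm:main}. There is therefore no proof in the present paper against which to compare your proposal.

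For what it is worth, your outline does capture the broad architecture one expects for asymmetric sum-product estimates of this type: an inductive bootstrap indexed by $k$, with a Szemer\'edi--Trotter energy bound at the base, a halving of the gain at each step (accounting for the $2^k$ in the exponent), and logarithmic and constant losses accumulating over the iteration. That said, what you have written is a plan rather than a proof. The mechanism you suggest for the inductive step---a dyadic pigeonhole on $r_{A-\alpha B}$ followed by Pl\"unnecke--Ruzsa and Cauchy--Schwarz---is not, as stated, enough to close the recursion: you have not explained why the pigeonholed set $B'$ inherits a usable energy hypothesis, nor why a single Cauchy--Schwarz step converts the level-$k$ conclusion into the level-$(k+1)$ conclusion with exactly the claimed exponent. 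The argument in \cite{Sh3} relies on more specialised higher-energy and operator-type machinery to make this passage work. Since the present paper gives no details, a genuine comparison is impossible here; to assess your sketch properly you would need to work through \cite{Sh3} directly.
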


In the above we have simplified the statement slightly to suit the application of the lemma in this paper. For the version of the statement in full generality see \cite[Corollary 33]{Sh3}. We note also here that the application of Theorem \ref{Shk:assym} in the proof of Theorem \ref{thm:main} is the main reason why the constant $c$ in the exponent is so minuscule.

\subsection{Sketch of the proof of Theorem \ref{thm:main}}
The starting point of the proof is the argument used by Balog to prove Theorem \ref{thm:old}. Balog's argument goes roughly as follows:

Consider the point set $A \times A$ in the plane. Cover this point set by lines through the origin. Let us assume for simplicity that all of these lines are equally rich, so we have $|A/A|$ lines with $k$ points on each line. Label the lines $l_1,l_2,\dots,l_{|A/A|}$ in increasing order of steepness. Note that if we take the vector sum of a point on $l_i$ with a point on $l_{i+1}$, we obtain a point which has slope in between those of $l_i$ and $l_{i+1}$. The aim is to show that many elements of $(AA+A) \times (AA+A)$ can be obtained by studying vector sums from neighbouring lines.

Indeed, for any $1 \leq i \leq |A/A|-1$, consider the sum set
$$\{(ab,ac)+(d,e): a\in A, (b,c) \in (A \times A) \cap l_{i}, (d,e) \in (A \times A) \cap l_{i+1} \}.$$
There are at least $|A|$ choices for $(ab,ac)$ and at least $k$ choices for $(d,e)$. Since all of these sums are distinct, we obtain at least $|A|k$ elements of $(AA+A) \times (AA+A)$ lying in between $l_i$ and $l_{i+1}$. Summing over all $1\leq i \leq |A/A|-1$, it follows that
$$|AA+A|^2 \gg |A|^3.$$

There are two rather crude steps in this argument. The first is the observation that there are at least $|A|$ choices for the point $(ab,ac) \in l_i$. In fact, the number of points of this form is equal to the cardinality of product set $AA_{i}$  where $A_{i}$ is the set of $x$-cooordinates of points in $(A \times A) \cap l_i$. The set $AA_i$ could be as small as $|A|$, but one would typically expect it to be considerably larger, in which case we would win.

The second wasteful step comes at the end of the argument, when we only consider sums coming from pairs of lines which are neighbours. This means that we consider only $k-1$ pairs of lines out of a total of ${k \choose 2}$. A crucial ingredient in the proof of \eqref{KS} was the ability to find a way to count sums coming from more than just neighbouring lines. Following that approach, we consider clusters of consecutive lines. It turns out that this approach gives us something better unless there is a strong additive structure between $A$ and $A_i$. To be more precise, we win unless the additive energy $E_+(A, \alpha A_i)$ is maximal for some $\alpha \in \mathbb R \setminus \{0\}$.

We make a small gain unless we are in both of these bad cases. However, if both of these cases occurred it would seem to imply that there is both additive and multiplicative structure existing between the sets $A$ and $A_i$, contradicting the sum-product principle. Indeed, we can use Theorem \ref{Shk:assym} to conclude that at least one of these bad cases does not occur.

However, we can only use the sum-product estimate of Theorem \ref{Shk:assym} if $k$ is larger then $|A|^{\epsilon}$ for some positive $\epsilon$. Therefore, we deal separately with the case when $k$ is very small (and thus the ratio set is very large) at the outset of the proof using a different method. In this case, we make a more straightforward application of the clustering approach from \cite{KS} in order to amplify the bound
\begin{equation}
|AA+AA| \geq |A||A/A|^{1/2},
\label{aa+aaold}
\end{equation}
stated above as Lemma \ref{balogaa+aa}. This approach gives an improvement on \eqref{aa+aaold} unless $A$ is very additively structured. If we have something better than \eqref{aa+aaold} then we can use the Ruzsa Triangle Inequality to complete the proof. If not then we can use the additive structure of $A$ to invoke sum-product type estimates, namely Theorem \ref{thm:RSS}, to complete the proof.

\section{Proof of main theorem}

\subsection{A lower bound for $|AA+AA|$}

The following lemma, which may be of independent interest, helps us to deal with the case when the ratio set is very large. The result gives an improvement on Lemma \ref{balogaa+aa} unless the additive energy of $X$ is very large. Its proof also gives a preview of the clustering strategy which is the fulcrum of the proof of the main theorem. A similar argument with different bounds can be found in the proof of Theorem 2 in \cite{IRNR}. 

\begin{lemma} \label{bigratio}
Let $A$ and $X$ be finite sets of positive reals and write $E_+(X)=|X|^3 /K$. Then
$$|AX+AX| \gg |X| |A/A|^{1/2} K^{1/8}.$$
\end{lemma}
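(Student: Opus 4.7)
I would prove Lemma \ref{bigratio} by extending Balog's geometric argument for Lemma \ref{balogaa+aa} with a clustering strategy, extracting a gain from the hypothesis $E_+(X) = |X|^3/K$, which is equivalent to $|X+X| \geq K|X|$.

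Cover $A \times A \subset \mathbb R^2$ by the $L := |A/A|$ lines through the origin, ordered by slope as $\ell_1, \ldots, \ell_L$ with slopes $\lambda_1 < \cdots < \lambda_L$, and let $S_i \subset A$ denote the set of $x$-coordinates of $(A \times A) \cap \ell_i$. Dilating by $X$ places on $\ell_i$ the planar set $P_i := \{(xa, \lambda_i xa) : x \in X,\ a \in S_i\}$, of cardinality $|XS_i|$. Partition the ordered lines into consecutive clusters $C_1, \ldots, C_{L/\tau}$ of common size $\tau$ (to be chosen at the end), and set $Q_j := \bigsqcup_{i \in C_j} P_i$. The slope of any vector sum $p + p'$ with $p \in \ell_i$ and $p' \in \ell_{i'}$ (for $\lambda_i < \lambda_{i'}$) lies strictly in $(\lambda_i, \lambda_{i'})$; hence for odd $j$ the sumsets $Q_j + Q_{j+1} \subseteq (AX+AX) \times (AX+AX)$ are supported in pairwise disjoint angular sectors of $\mathbb R^2$, which gives
$$|AX+AX|^2 \;\geq\; \sum_{j \text{ odd}} |Q_j + Q_{j+1}|. \qquad (\star)$$

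To bound each $|Q_j + Q_{j+1}|$ from below I would apply Cauchy--Schwarz in the form
$$|Q_j + Q_{j+1}| \;\geq\; \frac{|Q_j|^2 |Q_{j+1}|^2}{E_+(Q_j, Q_{j+1})},$$
and upper bound $E_+(Q_j, Q_{j+1})$ by classifying the coincidences $p_1 + p_2 = p_3 + p_4$ (with $p_1, p_3 \in Q_j$ and $p_2, p_4 \in Q_{j+1}$) according to which of the four line-indices coincide. In the \textit{fully mixed} case of four distinct indices, the two coordinates of the equation form a full-rank linear system and $(p_3, p_4)$ is uniquely determined by $(p_1, p_2)$; in the \textit{collinear} cases where two indices on the same side coincide (say $i_1 = i_3$), the differences $p_1 - p_3$ and $p_4 - p_2$ are forced to lie on a common line through the origin, and the count reduces to an additive energy of $XS_i$, which the hypothesis controls via $E_+(XS_i) \lesssim n_i^2 \, |X|^3/K$ through a Cauchy--Schwarz estimate on how differences in $XS_i$ decompose across dilates. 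Collecting the contributions, inserting into $(\star)$, and tuning $\tau$ yields
$$|AX+AX|^2 \;\gtrsim\; L\,|X|^2\,K^{1/4},$$
equivalent to the desired bound.

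The main obstacle is the case analysis in the upper bound on $E_+(Q_j, Q_{j+1})$ and the efficient transfer of the hypothesis on $E_+(X)$ to the dilates $XS_i$; the exponent $1/8$ on $K$ arises as the square root of the intermediate $K^{1/4}$ saving, which itself comes out of an averaging/Cauchy--Schwarz argument applied to $E_+(X) = |X|^3/K$. A similar but differently weighted calculation appears in the proof of Theorem 2 of \cite{IRNR}, which serves as the template.
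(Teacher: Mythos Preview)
Your clustering-by-angle framework and the reference to \cite{IRNR} are both on target, and the paper uses the same overall strategy. However, the execution in your sketch diverges from the paper's in a way that creates a real gap.

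The key simplification you are missing is this: the paper does \emph{not} put all of $XS_i$ on the line $\ell_i$. Instead it fixes a single arbitrary point $(a_\lambda,\lambda a_\lambda)\in\mathcal A_\lambda$ on each line and places only the $|X|$ dilates $\Delta(X)\cdot(a_\lambda,\lambda a_\lambda)$ there. With this choice, for any pair of distinct slopes one gets exactly $|X|^2$ sums, and within a cluster of $M$ lines the main term is $|X|^2\binom{M}{2}$. The collision term $\E(\lambda_1,\lambda_2,\lambda_3,\lambda_4)$ then reduces, after eliminating one coordinate, to the number of solutions of a linear equation $\alpha x_1+\beta x_2+\gamma x_3=0$ with $x_1,x_2,x_3\in X$ and fixed nonzero coefficients; one Cauchy--Schwarz gives the clean bound $|X|^{1/2}E_+(X)^{1/2}$, directly in terms of the hypothesis. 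Balancing $|X|^2M^2$ against $M^4|X|^{1/2}E_+(X)^{1/2}$ forces $M\approx K^{1/4}$ and yields $|AX+AX|^2\gg |A/A|\,|X|^2K^{1/4}$.

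Your version, by contrast, carries the full sets $XS_i$ and then needs to control $E_+(Q_j,Q_{j+1})$. The step ``the count reduces to an additive energy of $XS_i$, which the hypothesis controls via $E_+(XS_i)\lesssim n_i^2|X|^3/K$'' is the gap. First, in the collinear case $i_1=i_3$ with $i_2\neq i_4$ the constraint is that $p_4-p_2$ lies on $\ell_{i_1}$ and equals a prescribed vector; this does not collapse to $E_+(XS_{i_1})$ but rather to a count that, handled crudely, gives only $\tau^2|Q_j||Q_{j+1}|$ and hence no gain over Balog's bound. Second, even where an energy of a dilated set enters, there is no general inequality of the form $E_+(XS)\lesssim |S|^2E_+(X)$: knowing $E_+(X)$ is small says nothing a priori about additive coincidences among different dilates $sX$, $s'X$. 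The ``Cauchy--Schwarz on how differences decompose across dilates'' that you invoke would at best give $E_+(XS)\le |S|^2\max_{s,s'}E_+(sX,s'X)$, and $E_+(sX,s'X)$ is not controlled by $E_+(X)$ unless $s=s'$.

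So: keep the clustering, but throw away the sets $S_i$ and use a single representative per line. Then your error term becomes a three-variable linear equation over $X$ alone, and the hypothesis $E_+(X)=|X|^3/K$ applies immediately.
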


\begin{proof}

Following the notation of \cite{KS}, for a real nonzero $\lambda$, define
$$ \mathcal A_{\lambda}:= \left \{(x,y) \in A \times A : \frac{y}{x}=\lambda \right \},$$
and its projection onto the horizontal axis,
$$A_{\lambda}:=\{x:(x,y) \in \mathcal A_{\lambda}\}.$$
Note that $|\mathcal A_{\la}|=|A_{\la}|=|A \cap \la A|$ and
\begin{equation}
\sum_{\la} |A_{\la}|=|A|^2.
\label{obvious}
\end{equation}

Observe that $A \times A$ is covered by $|A/A|$ lines through the origin, and indeed $A/A$ is precisely equal to the set of slopes of these lines. Label the lines $l_1,l_2, \dots, l_{|A/A|}$ in increasing order of steepness, so the line $l_i$ has gradient strictly less than that of $l_j$ if and only if $i<j$.

For each $\lambda \in A/A$, we identify an arbitrary element from $\mathcal A_{\la}$, which we label $(a_{\lambda},\lambda a_{\lambda})$. Then, fixing two distinct slopes $\lambda_1$ and $\lambda_2$ from $A/A$ and following the observation of Balog \cite{balog}, we note that at least $|X|^2$ distinct elements of $(AX+AX) \times (AX+AX)$ are obtained by summing pairs of vectors from the two lines with slope $\lambda_1$ and $\lambda_2$. Indeed,
$$\Delta(X)\cdot(a_{\lambda_1},\lambda_1 a_{\lambda_1})  + \Delta(X) \cdot (a_{\lambda_2},\lambda_2a_{\lambda_2})  \subset (AX+AX) \times (AX+AX),$$
where
$$
\Delta(X)=\{(x,x) : a \in X \}.
$$
Note that these $|X|^2$ vector sums have slope in between $\lambda_1$ and $\lambda_2$. This is a consequence of the observation of Solymosi \cite{solymosi} that the sum set of $m$ points on one line through the origin and $n$ points on another line through the origin consists of $mn$ points lying in between the two lines.  This fact expresses linear independence of two vectors in the two given directions, combined with the fact that multiplication by positive numbers preserves order of reals.\footnote{It is worth noting here that this fact is dependent on the points lying inside the positive quadrant of the plane, which is why the assumption that $A$ consists of strictly positive reals is needed for this proof.}

Following the strategy of \cite{KS}, we split the family of $|A/A|$ slopes into clusters of $M$ consecutive lines, where $2\leq M \leq |A/A|$ is a parameter to be specified later. For example, the first cluster is $U_1= \{l_1,\dots,l_{M}\}$, the second is $U_2=\{l_{M+1},\dots,l_{2M}\}$, and so on.. The idea is to show that each cluster determines many different elements of $(AX+AX) \times (AX+AX)$. Since the slopes of these elements are in between the maximal and minimal values in that cluster, we can then sum over all clusters without overcounting.

If a cluster contains exactly $M$ lines, then it is called a \textit{full cluster}. Note that there are $\left\lfloor \frac{|A/A|}{M} \right\rfloor \geq \frac{|A/A|}{2M}$ full clusters, since we place exactly $M$ lines in each cluster, with the possible exception of the last cluster which contains at most $M$ lines.

Let $U$ be a full cluster. The forthcoming analysis will work in exactly the same way for any full cluster, and so for simplicity of notation we deal only with the first cluster $U=\{l_1,\dots, l_M\}$. We will sometimes abuse notation by identifying $U$ with the slopes of the lines in $U$.  Let $\mu$ denote the number of elements of $(AX+AX) \times (AX+AX)$ which lie in between the slopes of $l_1$ and $l_M$. Then,



\begin{equation}\mu \geq |X|^2 {M \choose 2} - \sum_{\lambda_1,\lambda_2,\lambda_3,\lambda_4 \in U: \{\lambda_1,\lambda_2\} \neq \{\lambda_3,\lambda_4\}} \E(\lambda_1,\lambda_2,\lambda_3,\lambda_4),
\label{mucount2}
\end{equation}
where
\begin{align*}
\E(\lambda_1,\lambda_2,\lambda_3,\lambda_4)  := &
 |\{[\Delta(X) \cdot (a_{\la_1},\la_1 a_{\la_1}) +\Delta(X) \cdot (a_{\lambda_2},\lambda_2a_{\lambda_2})]
\\& \cap [\Delta(X) \cdot (a_{\lambda_3},\lambda_3a_{\lambda_3})+\Delta(X) \cdot (a_{\lambda_4},\lambda_4a_{\lambda_4})] \}|.
\end{align*}
In \eqref{mucount2}, the first term is obtained by counting sums from all pairs of distinct lines from $U$. The second error term covers the overcounting of elements that are counted more than once in the first term. 

The next task is to obtain an upper bound for $\E(\lambda_1,\lambda_2,\lambda_3,\lambda_4)$ for an arbitrary quadruple $(\la_1,\la_2,\la_3,\la_4)$ which satisfies the aforementioned conditions.

Suppose that
\begin{align*}
z=(z_1,z_2) \in  & [ \Delta(X) \cdot (a_{\lambda_1},\lambda_1a_{\lambda_1})+\Delta(X) \cdot (a_{\lambda_2},\lambda_2a_{\lambda_2})]
\\ & \cap  [\Delta(X) \cdot (a_{\lambda_3},\lambda_3a_{\lambda_3})+\Delta(X) \cdot (a_{\lambda_4},\lambda_4a_{\lambda_4})],
\end{align*}
that is
$$
(z_1,z_2) =(x_1a_{\la_1},x_1\lambda_1a_{\la_1})+(x_2a_{\lambda_2},x_2\lambda_2a_{\lambda_2})
=(x_3a_{\la_3},x_3\lambda_3a_{\la_3})+(x_4a_{\lambda_4},x_4\lambda_4a_{\lambda_4}),
$$
for some $x_1,x_2,x_3,x_4 \in X$. Therefore,
$$\begin{array}{lccccccc}
z_1&=&x_1a_{\la_1}+x_2a_{\lambda_2}&=&x_3a_{\la_3}+x_4a_{\lambda_4}\\
z_2&=&x_1\lambda_1a_{\la_1}+x_2\lambda_2a_{\lambda_2}&=&x_3\lambda_3a_{\la_3}+x_4\lambda_4a_{\lambda_4}.
\end{array}$$
It follows from the conditions on the quadruple $(\la_1,\la_2,\la_3,\la_4)$ that at least one of its members differs from the other three. Without loss of generality $\la_4\neq \la_1,\la_2,\la_3$. Then
$$0=x_1\lambda_1a_{\la_1}+x_2\lambda_2a_{\lambda_2}-x_3\lambda_3a_{\la_3}-x_4\lambda_4a_{\lambda_4} - \lambda_4(x_1a_{\la_1}+x_2a_{\lambda_2}-x_3a_{\la_3}-x_4a_{\lambda_4}),$$
and thus
\begin{equation}
0=x_1a_{\la_1}(\lambda_1-\lambda_4)+x_2a_{\lambda_2}(\lambda_2-\lambda_4)+x_3a_{\la_3}(\lambda_4-\lambda_3).
\label{STsetuppp}
\end{equation}

Note that the values $a_{\la_1}(\lambda_1-\lambda_4), a_{\lambda_2}(\lambda_2-\lambda_4)$ and $a_{\la_3}(\lambda_4-\lambda_3)$ are all non-zero. We have shown that each contribution to $\E (\lambda_1,\lambda_2,\lambda_3,\lambda_4)$ determines a solution to \eqref{STsetuppp}. Furthermore, the solution $(x_1,x_2,x_3)$ to \eqref{STsetuppp} that we obtain via this deduction is unique. That is, if we start out with a different element
$$z \in [\Delta(X)(a_{\lambda_1},\lambda_1a_{\lambda_1})+\Delta(X)(a_{\lambda_2},\lambda_2a_{\lambda_2})]
\cap  [\Delta(X)(a_{\lambda_3},\lambda_3a_{\lambda_3})+\Delta(X)(a_{\lambda_4},\lambda_4a_{\lambda_4})],$$
we obtain a different solution to \eqref{STsetuppp}. 

Therefore, in order to obtain an upper bound for $\E(\la_1,\la_2,\la_3, \la_4)$ it will suffice to obtain an upper bound for the number of solutions to \eqref{STsetuppp}.  Let $T$ denote the number of such solutions. Then, by two applications of the Cauchy-Schwarz inequality, we have
 \begin{align*}
T&= \sum _{x_1 \in X} \left | \left\{(x_2,x_3) \in X \times X : x_1=\frac{a_{\lambda_2}(\lambda_2-\lambda_4)}{a_{\la_1}(\la_4-\la_1)}x_2+\frac{a_{\la_3}(\lambda_4-\lambda_3)}{a_{\la_1}(\la_4-\la_1)}x_3 \right \} \right |
\\& \leq |X|^{1/2} E_+^{1/2}\left (\frac{a_{\lambda_2}(\lambda_2-\lambda_4)}{a_{\la_1}(\la_4-\la_1)}X, \frac{a_{\la_3}(\lambda_4-\lambda_3)}{a_{\la_1}(\la_4-\la_1)}X \right)
\\&\leq |X|^{1/2}E_+(X)^{1/2}.
\end{align*}

Therefore, by \eqref{mucount2}, we have
\begin{equation}
 \mu \geq \frac{ |X|^2 M^2 }{4} - M^4 |X|^{1/2} E_+(X)^{1/2}.
\label{mucount3}
\end{equation}
We now choose the integer parameter $2 \leq M \leq |A/A|$. We want $M$ to be as large as possible such that the first term in \eqref{mucount3} is dominant. A suitable choice is
\begin{equation}
M:= \left \lfloor \left( \frac{ |X|^{3/2}}{8E_+^{1/2}(X) } \right )^{1/2} \right \rfloor .
\label{Mchoice22}
\end{equation}
With this choice we need to verify the condition that $2 \leq M \leq |A/A| $. It is not difficult to check that if these conditions are violated then the claimed result holds.

Indeed, if $M < 2$ then $E_+(X) \gg |X|^3$. Then $K=|X|^3 / E_+(X)$ is an absolute constant, and the result holds by Lemma \ref{balogaa+aa}. On the other hand, if $M > |A/A|$ then it follows that $K^{1/4} > |A/A|$. If this happens then we can use \eqref{CSclassic} to complete the proof since
$$|AX+AX| \geq |X+X| \geq \frac{|X|^4}{E_+(X)} = |X|K > |X| |A/A|^{1/2} K^{7/8} \geq |X| |A/A|^{1/2} K^{1/8}.$$

Therefore, we can assume that the condition $2 \leq M \leq |A/A|$ is satisfied, and then by \eqref{Mchoice22} we have $M \approx K^{1/4}.$  We now have the bound $\mu \gg |X|^2 M^2$. Summing over all full clusters, of which there are at least $|A/A|/2M$, we have
$$|AX+AX|^2 \gg |X|^2|A/A| M \gg |X|^2|A/A| K^{1/4}.$$
This completes the proof of the lemma.

\end{proof}

\subsection{The case when the ratio set is large}

Let $\epsilon > 0$ be a small fixed positive constant. We do not make an effort to optimise the choice of $\epsilon$, and instead we simply fix $\epsilon=1/600$ for ease of calculation. In this subsection we assume that $|A/A| \geq |A|^{2-1/24+\epsilon}$.

We make an application of Theorem \ref{thm:RSS}. Let $X \subset A$ be the set given by this and write $E_+(X)=|X|^3/K$.  Recall also that $|X| \gg |A|$.

\textbf{Subcase 1} - Suppose that $K \geq |A|^{1/6 - \epsilon}$. Then by Lemma \ref{bigratio} and the Ruzsa Triangle Inequality we have 
$$|A|^{3+3\epsilon/8} \leq |A|^{3+\epsilon/2-\epsilon/8} \ll |X| |A/A|^{1/2} K^{1/8}|A| \ll |AX+AX||A| \leq |AA+AA| |A| \leq |AA+A|^2.$$
This implies the required bound $|AA+A| \gg |A|^{3/2+c}$.

\textbf{Subcase 2} - Suppose that $K \leq |A|^{1/6 - \epsilon}$. Then 
by Theorem \ref{thm:RSS}, we have
$$|A|^{9-1/2+3\epsilon} \cdot E_*(Y) \ll \frac{|X|^9}{K^3} \cdot E_*(Y) = E_+(X)^3 \cdot E_*(Y) \lesssim  |A|^{11}  .$$
Simplifying this inequality, applying \eqref{CSclassic} and using the bound $|Y| \gg |A|$ (given by Theorem \ref{thm:RSS}) yields
$$\frac{|A|^4}{|YY|} \ll \frac{|Y|^4}{|YY|} \leq E_*(Y) \lesssim  |A|^{2+1/2-3 \epsilon}.$$
It then follows that
$$|AA+A| \geq |AA| \geq |YY| \gtrsim  |A|^{3/2 + 3\epsilon} \geq |A|^{3/2 +\epsilon}.$$
The last inequality above follows by taking $n_0$ sufficiently large.

In both of these two subcases we achieve the desired result. Therefore, for the rest of the proof, we may assume that $|A/A| \leq |A|^{2-1/24+\epsilon}$. Recalling the choice $\epsilon=1/600$, we thus have that $|A/A| \leq |A|^{2-1/25}$.

\subsection{Some initial dyadic pigeonholing}

Consider the point set $A \times A$ in the plane. At the outset, we perform a dyadic decomposition, and then apply the pigeonhole principle, in order to find a large subset of $A\times A$ consisting of points lying on lines through the origin which contain between $\tau$ and $2\tau$ points, where $\tau$ is some real number.

Let $S_{\tau}$ be defined by
$$S_{\tau}:=\{\lambda: \tau \leq |A \cap \lambda A| < 2\tau \}.$$
After dyadically decomposing the sum \eqref{obvious}, we have
$$|A|^2=\sum_{\la} |A_{\la}| = \sum_{j=1}^{\lceil \log|A| \rceil} \sum_{\la \in S_{2^{j-1}}}|A_{\la}| .$$
Applying the pigeonhole principle, we deduce that there is some $\tau$ such that
\begin{equation}
\sum_{\la \in S_{\tau}}|A_{\la}| \geq \frac{|A|^2}{\lceil \log|A| \rceil} \geq \frac{|A|^2}{2\log|A|}.
\label{sumboundd}
\end{equation}
Also, since $|A_{\la}| < 2\tau$ for any $\la \in S_{\tau}$, we have
\begin{equation}
\tau|S_{\tau}| \gg \frac{|A|^2}{ \log|A|}.
\label{tauboundd}
\end{equation}

We then make another dyadic decomposition to control the sizes of the product sets $AA_{\la}$. Define $S_{\tau}^{(t)} \subseteq S_{\tau}$ to be the set
$$ S_{\tau}^{(t)}:= \{ \la \in S_{\tau} : t |A| \leq |AA_{\la}| \leq 2t |A| \}. $$
We have
$$|S_{\tau}|= \sum_{t=1}^{\lceil \log|A| \rceil} |S_{\tau}^{(t)}|$$
and so there is some $t_0$ such that $|S_{\tau}^{t_0}| \gg |S_{\tau}| / \log |A|$. We shorten $S_{\tau}^{t_0}$ to $S$, so we have
$$S:=\{ \la \in S_{\tau} : |A|t_0 \leq |AA_{\la}| < 2|A|t_0 \}$$
and
$$|S| \gg \frac{|S_{\tau}|}{\log |A|}.$$
Therefore, by \eqref{tauboundd} we have
\begin{equation}
\tau |S| \gg \frac{|A|^2}{\log ^2|A|}.
\label{Staubound}
\end{equation}
Note also, because of our assumption that the ratio set is not large, we have 
$$|S| \leq |A/A| \leq |A|^{2-\frac{1}{25}}$$ 
and so it follows from \eqref{Staubound} that
\begin{equation}
\tau \gg \frac { |A|^{\frac{1}{25}} }{\log^2|A|}.
\label{taubound2}
\end{equation}

\subsection{Application of the assymetric sum-product estimate}

We would like to apply Theorem \ref{Shk:assym} with $B=A_{\la}$ and $\la \in S$. To do this we need to choose a sufficiently large value of $k$ so that the inequality
$$|A_{\la}|^{\frac{k}{8}-\frac{1}{4}+\frac{1}{4(k+4)}} \geq |A| C_*^{\frac{k+4}{4}} \log^k(|A||A_{\la}|)$$
is satisfied. Because of the bound \eqref{taubound2}, it suffices to choose $k$ such that
$$\frac{|A|^{\frac{(k-2)(k+4)+2}{25\cdot 8 (k+4)}}}{\log^k|A|} \geq |A| C_*'^{\frac{k+4}{4}}2^k \log^k|A|,$$
which rearranges to give
\begin{equation}
|A|^{\frac{(k-2)(k+4)+2 - 200(k+4)}{200 (k+4)}} \geq  C_*''^{\frac{k+4}{4}} \log^{2k}|A|.
\label{uglyneed}
\end{equation}
By choosing the value $n_0$ in the statement of the theorem to be sufficiently large\footnote{This is the only time in the proof where we use the largeness of $n_0$ in a meaningful way. In all other instances we are just using this assumption to superficially remove log factors.}, it will suffice that exponent on the left hand side of \eqref{uglyneed} is larger than some small positive value. For example, it will suffice to choose $k$ sufficiently large such that
$$\frac{(k-2)(k+4)+2 - 200(k+4)}{200 (k+4)} \geq \frac{1}{200}.$$
One can then directly verify that $k=203$ is sufficient.

Therefore, we can apply Theorem \ref{Shk:assym} with this choice of $k$ to deduce that, for any non-zero $\alpha$,
$$|AA_{\la}| +\frac{|A|^2|A_{\la}|^2}{ E_+(A, \alpha A_{\la})} \geq \frac{|A||A_{\la}|^{\frac{1}{4(k+4)2^k}}}{16} \geq \frac{|A|\tau^{\frac{1}{2^{213}}}}{16}. $$
In particular, we have either 
\begin{equation}
|AA_{\la}| \geq \frac{1}{32}|A|\tau^{2^{-213}}
\label{bigprod}
\end{equation}
or
\begin{equation}
E_+(A,\alpha A_{\la}) \leq 32 |A||A_{\la}|^2 \tau^{-2^{-213}} \leq 128 |A| \tau^{2-2^{-213}},
\label{smallenergy}
\end{equation}
for all non-zero $\alpha$.

\subsection{The case when $t_0$ is ``large"}

In this section we will show that in the situation of \eqref{bigprod} we are done, and that it can thus be assumed that \eqref{smallenergy} holds.

Suppose that 
$$t_0 \geq \frac{\tau^{\frac{1}{2^{213}}}}{64}$$
and so
$$|AA_{\la}| \geq \frac{ |A|\tau^{\frac{1}{2^{213}}}}{64}$$ 
for all $\la \in S$.  In this case we can obtain the desired result with a small modification of Balog's argument in \cite{balog} (see also \cite{shkredov}).

After carrying out the aforementioned pigeonholing argument, we have a set of $|S|$ lines through the origin, each containing approximately $\tau$ points from $A \times A$. Label the slopes of these lines as $\la_1,\la_2,\dots,\la_{|S|}$ in increasing order of size. For any $1 \leq i \leq |S|-1$, consider the sum set
\begin{equation}
\mathcal A_{\la_i}+\mathcal A_{\la_{i+1}} \cdot \Delta(A) \subset (A+AA) \times (A+AA),
\label{vectorsumm}
\end{equation}
where again $\Delta(A)=\{(a,a):a \in A\}$. Note that $\mathcal A_{\la_{i+1}} \cdot \Delta(A)$ has cardinality $|A_{\la_{i+1}}A|$, and therefore the set $\mathcal A_{\la_i}+\mathcal A_{\la_{i+1}} \cdot \Delta(A)$ has $|A_{\la_i}||A_{\la_{i+1}}A|$ elements, all of which lie in between the lines through the origin with slopes $\la_i$ and $\la_{i+1}$. This is a consequence of the same observation of Solymosi that was used in the proof of Lemma \ref{bigratio}.

Summing over all $1 \leq i < |S|$, and applying \eqref{Staubound} and \eqref{taubound2}, we have
\[
|AA+A|^2 \geq \sum_{i=1}^{|S|-1} |A_{\la_i}||AA_{\la_{i+1}}| \gg |S| \tau |A| \tau^{\frac{1}{2^{213}}} \gg  \frac{|A|^3 \tau^{\frac{1}{2^{213}}} }{\log^2|A|} \gg \frac{ |A|^{3+2^{-218}} }{\log^3|A|} \geq |A|^{3+2^{-219}} .
\]
The final inequality above follows by choosing the value $n_0$ in the statement of the theorem sufficiently large.
This implies the required bound. 

Therefore we may assume henceforth that
$$t_0 < \frac{\tau^{\frac{1}{2^{213}}}}{64}$$
and thus
$$|AA_{\la}| < \frac{ |A|\tau^{\frac{1}{2^{213}}}}{32}$$
for all $\la \in S$. 

Returning to the previous subsection, we now have that \eqref{bigprod} does not hold, and so \eqref{smallenergy} does hold. That is, we now assume for the remainder of the proof that
\begin{equation}
E_+(A,\alpha A_{\la}) \leq 128 |A| \tau^{2-2^{-213}},
\label{keyenergy}
\end{equation}
for all $\la \in S$ and any $\alpha \neq 0$. This non-trivial lower bound for the additive energy of $A$ and $\alpha A_{\la}$ will play a vital role later in the proof.

\subsection{Clustering setup}

Here we repeat the clustering setup used in the proof of Lemma \ref{bigratio}, but with some modifications in order to work towards a lower bound for $|AA+A|$. Label the lines $l_1,l_2, \dots, l_{|S|}$, corresponding to the slopes of $S$, in increasing order of steepness. So the line $l_i$ has gradient strictly less than that of $l_j$ if and only if $i<j$.

For each $\lambda \in S$, we identify an element from $\mathcal A_{\la}$, which we label $(a_{\lambda},\lambda a_{\lambda})$. These fixed points are chosen completely arbitrarily.

Then, fixing two distinct slopes $\lambda$ and $\lambda'$ from $S$ and following the observation of Balog \cite{balog}, we note that at least $\tau|A|$ distinct elements of $(AA+A) \times (AA+A)$ are obtained by summing points from the two lines. Indeed,
$$\mathcal A_{\lambda}+(a_{\lambda'},\lambda'a_{\lambda'}) \cdot \Delta(A) \subset (AA+A) \times (AA+A).$$ 
Once again, these vector sums are all distinct and have slope in between $\lambda$ and $\lambda'$.

Following the strategy of Konyagin and Shkredov \cite{KS}, we split the family of $|S|$ slopes into clusters of $2M$ consecutive slopes, where $2\leq 2M \leq |S|$ and $M$ is a parameter to be specified later. We then split each cluster arbitrarily into two disjoint subclusters of size $M$. For example, we have $U_1=V_1 \sqcup W_1$ where $V_1=\{l_1,\dots,l_M\}$ and $W_1=\{l_{M+1},\dots,l_{2M}\}$. The idea is to show that each cluster determines many different elements of $(A+AA) \times (A+AA)$.

If a cluster contains exactly $2M$ lines, then it is called a \textit{full cluster}. Note that there are $\left\lfloor \frac{|S_{\tau}|}{2M} \right\rfloor \geq \frac{|S_{\tau}|}{4M}$ full clusters, since we place exactly $2M$ lines in each cluster, with the possible exception of the last cluster which contains at most $2M$ lines.

The proceeding analysis will work in exactly the same way for any full cluster, and so for simplicity of notation we deal only with the first cluster $U_1$. We further simplify this by writing $U_1=U$, $V_1=V$ and $W_1=W$.

Let $\mu$ denote the number of elements of $(AA+A) \times (AA+A)$ which lie in between $l_1$ and $l_{2M}$. Then, similarly to the proof of Lemma \ref{bigratio},



\begin{equation}\mu \geq \tau|A| M^2 - \sum_{\lambda_1, \lambda_3 \in V ,\lambda_2,\lambda_4 \in W: \{\lambda_1,\lambda_2\} \neq \{\lambda_3,\lambda_4\}} \mathcal E(\lambda_1,\lambda_2,\lambda_3,\lambda_4),
\label{mucount22}
\end{equation}
where
$$\mathcal E(\lambda_1,\lambda_2,\lambda_3,\lambda_4):=|\{z \in (\mathcal A_{\lambda_1}+(a_{\lambda_2},\lambda_2a_{\lambda_2})\cdot\Delta(A))\cap (\mathcal A_{\lambda_3}+(a_{\lambda_4},\lambda_4a_{\lambda_4})\cdot\Delta(A)) \}|.$$

The next task is to obtain an upper bound for $\mathcal E(\lambda_1,\lambda_2,\lambda_3,\lambda_4)$ for an arbitrary quadruple $(\la_1,\la_2,\la_3,\la_4)$ which satisfies the aforementioned conditions.

\subsection{Bounding $\mathcal E(\lambda_1,\lambda_2,\lambda_3,\lambda_4)$ in the case when $\la_4 \neq \la_2$} Let us fix $\la_1,\la_2,\la_3, \la_4$ with $\la_4 \neq \la_2$.
Note that this assumption implies that $\la_4\neq \la_1,\la_2,\la_3$. 

Suppose that 
$$z=(z_1,z_2) \in [\mathcal A_{\lambda_1}+(a_{\lambda_2},\lambda_2a_{\lambda_2})\cdot\Delta(A)]\cap [\mathcal A_{\lambda_3}+(a_{\lambda_4},\lambda_4a_{\lambda_4})\cdot\Delta(A)].$$
Then
\begin{align*}
(z_1,z_2) &=(a_1,\lambda_1a_1)+(a_{\lambda_2}a,\lambda_2a_{\lambda_2}a)
\\&=(a_3,\lambda_3a_3)+(a_{\lambda_4}b,\lambda_4a_{\lambda_4}b),
\end{align*}
for some $a_1 \in A_{\lambda_1}$, $a_3 \in A_{\lambda_3}$ and $a,b \in A$. Therefore,
\begin{align*}
z_1&=a_1+a_{\lambda_2}a=a_3+a_{\lambda_4}b
\\z_2&=\lambda_1a_1+\lambda_2a_{\lambda_2}a=\lambda_3a_3+\lambda_4a_{\lambda_4}b.
\end{align*}
We have
$$0=\lambda_1a_1+\lambda_2a_{\lambda_2}a-\lambda_3a_3-\lambda_4a_{\lambda_4}b - \lambda_4(a_1+a_{\lambda_2}a-a_3-a_{\lambda_4}b),$$
and thus
\begin{equation}
0=a_{\lambda_2}(\lambda_2-\lambda_4)a+(\lambda_1-\lambda_4)a_1+(\lambda_4-\lambda_3)a_3.
\label{Cauchysetup}
\end{equation}

Note that the values $\lambda_1-\lambda_4, a_{\lambda_2}(\lambda_2-\lambda_4)$ and $\lambda_4-\lambda_3$ are all non-zero (recall here that the value $a_{\la_2}$ is a fixed constant given by our earlier choice of a fixed point on each line). Let $T_1$ denote the number of solutions to \eqref{Cauchysetup} such that $(a,a_1,a_3) \in A \times A_{\la_1} \times A_{\la_3}$. We have shown that each contribution to $\mathcal E (\lambda_1,\lambda_2,\lambda_3,\lambda_4)$ determines a solution to \eqref{Cauchysetup}. Furthermore, the solution to \eqref{Cauchysetup} that we obtain via this deduction is unique, and so
$$\mathcal E(\la_1,\la_2,\la_3,\la_4) \leq T_1.$$

By the Cauchy-Schwarz inequality, we have
\begin{align*}
T_1&=\sum_{a_1 \in A_{\la_1}} \left | \left \{(a,a_3) \in A \times A_{\la_3} : a_1=\frac{a_{\la_2}(\la_2-\la_4)}{\la_4-\la_1}a+ \frac{\la_4-\la_3}{\la_4-\la_1}a_3  \right \} \right |
\\& \leq |A_{\la_1}|^{1/2} \left( \sum_{x}\left | \left \{(a,a_3) \in A \times A_{\la_3} : x=\frac{a_{\la_2}(\la_2-\la_4)}{\la_4-\la_1}a+ \frac{\la_4-\la_3}{\la_4-\la_1}a_3  \right \} \right |^2 \right)^{1/2}
\\&=|A_{\la_1}|^{1/2} E^+\left (A, \frac{\la_4-\la_3}{a_{\la_2}(\la_2-\la_4)} A_{\la_1} \right)^{1/2}.
\end{align*}
By inequality \eqref{keyenergy} and the trivial bound $|A_{\la_1}| \leq |A|$, we then have
$$\mathcal E(\lambda_1,\lambda_2,\lambda_3,\lambda_4) \leq 12 |A|\tau^{1-2^{-214}}.$$
Therefore,
\begin{equation}
\mu \geq M^2 |A|\tau -12 M^4 |A|\tau^{1-2^{-214}}  -\sum_{\lambda_1, \lambda_3 \in V ,\lambda_2 \in W: \lambda_1 \neq \lambda_3} \mathcal E(\lambda_1,\lambda_2,\lambda_3,\lambda_2).
\label{mu}
\end{equation}

We now impose a condition on the parameter $M$ (recall that we will choose an optimal value of $M$ at the conclusion of the proof) to ensure that the first error term is dominated by the main term. We need
$$12 M^4 |A|\tau^{1-2^{-214}}  \leq \frac{M^2|A|\tau}{2},$$
which simplifies to 
\begin{equation} \label{Mcondd}
M \leq  \frac{ \tau^{2^{-215}} } {\sqrt {24}} .
\end{equation}
With this restriction on $M$, we now have
\begin{equation}
\mu \geq \frac{M^2 |A|\tau}{2}  -\sum_{\lambda_1, \lambda_3 \in V ,\lambda_2 \in W: \lambda_1 \neq \lambda_3} \mathcal E(\lambda_1,\lambda_2,\lambda_3,\lambda_2).
\label{muu}
\end{equation}
It remains to bound the error term in \eqref{muu}.

\subsection{Bounding $\mathcal E(\lambda_1,\lambda_2,\lambda_3,\lambda_4)$ in the case $\la_4 = \la_2$}


Now we fix $\la_1,\la_2, \la_3$ and seek to bound $\mathcal E(\la_1,\la_2,\la_3,\la_2)$. Note that this implies that $\la_3 \neq \la_1, \la_2$. Similarly to the previous subsection, suppose that
$$z=(z_1,z_2) \in (\mathcal A_{\lambda_1}+(a_{\lambda_2},\lambda_2a_{\lambda_2})\cdot\Delta(A))\cap (\mathcal A_{\lambda_3}+(a_{\lambda_2},\lambda_2a_{\lambda_2})\cdot\Delta(A) ).$$
Similar calculations show that we then have
$$0=\lambda_1a_1+\lambda_2a_{\lambda_2}a-\lambda_3a_3-\lambda_2a_{\lambda_2}b - \lambda_3(a_1+a_{\lambda_2}a-a_3-a_{\lambda_2}b),$$
for some $(a_1,a,a_3,b) \in A_{\la_1} \times A \times A_{\la_3}\times A$, and thus
\begin{equation}
0=(\la_1-\la_3)a_1 + a_{\la_2}(\la_2-\la_3)(a-b).
\label{CSsetupp}
\end{equation}

Let $T_2$ denote the number of solutions to \eqref{CSsetupp} such that $(a_1,a,b) \in A_{\la_1} \times A \times A$. We have shown that $\mathcal E( \la_1,\la_2,\la_3,\la_2) \leq T_2$. We can bound $T_2$ using the Cauchy-Schwarz inequality as in the case when $\la_2 \neq \la_4$ above. 
We obtain\footnote{Actually, Lemma \ref{Shk:assym} in a stronger forms says (see Corollary 32 of \cite{Sh3})  that either $|AB| \ll |A| |B|^{1/(4(k+4) 2^k)}$ or
	$|A\cap (A+x)| \ll |A| |B|^{-1/(4(k+4) 2^k)}$ for any nonzero $x$. Hence one can save a power of $\tau$ and bound 
	$T_2 \ll |A| |B|^{-1/(4(k+4) 2^k)} |A_{\lambda_1}| \ll |A| \tau^{1-2^{-213}}$. 
	We do not use these more accurate calculations in our proof.}
	
\begin{align*}
T_2&=\sum_{a \in A} \left | \left \{(b,a_1) \in A \times A_{\la_1} : a= b + \frac{\la_1-\la_3}{a_{\la_2}(\la_3-\la_2)}a_1 \right \} \right |
\\& \leq |A|^{1/2} \left( \sum_{x} \left | \left \{(b,a_1) \in A \times A_{\la_1} : x= b + \frac{\la_1-\la_3}{a_{\la_2}(\la_3-\la_2)}a_1 \right \} \right |^2 \right)^{1/2}
\\&=|A|^{1/2} E^+\left (A, \frac{\la_1-\la_3}{a_{\la_2}(\la_3-\la_2)} A_{\la_1} \right)^{1/2}.
\end{align*}
By inequality \eqref{keyenergy}, we then have
$$\mathcal E(\lambda_1,\lambda_2,\lambda_3,\lambda_2) \leq 12 |A|\tau^{1-2^{-214}}.$$
Combining this bound with \eqref{muu}, it follows that
\begin{equation}
\mu \geq \frac{M^2 |A|\tau}{2}  - 12M^3|A|\tau^{1-2^{-214}}.
\label{mu2}
\end{equation}
We impose another condition on $M$ so as to ensure that the first term remains dominant. To be precise, we impose the condition that
$$12M^3|A|\tau^{1-2^{-214}} \leq \frac{M^2|A| \tau }{4},$$
which simplifies to
\begin{equation}
M \leq \frac{ \tau^{2^{-214}}}{48}.
\label{Mcondd2}
\end{equation}
With this assumption, it follows that
\begin{equation}
 \mu \geq \frac{M^2|A|\tau}{4}.
\label{mufinal}
\end{equation}

\subsection{Choosing $M$ and concluding the proof}

We need to choose our integer parameter $M$ so that it satisfies both \eqref{Mcondd} and \eqref{Mcondd2}. We therefore finally fix
$$M:= \left \lfloor \frac{\tau^{2^{-215}}}{48}\right \rfloor.$$
We should check that this choice satisfies the condition that $2 \leq 2M \leq |S|$. The lower bound follows from the fact that $\tau \gg \frac{|A|^{1/25}}{\log^2|A|}$ by taking $n_0$ sufficiently large, while the upper bound follows from \eqref{Staubound}.

It then follows from \eqref{taubound2} and from taking $n_0$ sufficiently large that 
\begin{equation}
M \gg \tau^{2^{-215}}  \geq (|A|^{1/32})^{2^{-215}}=|A|^{2^{-220}}.
\label{Mfinal}
\end{equation}

Since this choice of $M$ is valid, we can now conclude the proof. The bound \eqref{mufinal} holds for all full clusters, and so we can sum over all $\lfloor |S|/2M \rfloor$ such clusters, also applying \eqref{Mfinal} and \eqref{Staubound}, to get
\begin{align*}
|AA+A|^2 \gg \frac{|S|}{M} M^2|A|\tau & =|S|M|A|\tau
\\& \gg (|S| \tau) |A|^{1+2^{-220}}
\\& \gg \frac{ |A|^{3+2^{-220}}}{\log^2|A|}
\\ & \geq  |A|^{3+2^{-221}}.
\end{align*}
We have thus finally proved that $|AA+A| \gg |A|^{3/2 + 2^{-222}}$.\qedsymbol

\subsection{A lower bound for the size of $AB+A$}

A small modification of the proof of Theorem \ref{thm:main} gives the following generalisation.

\begin{theorem} \label{thm:maingen} There is an integer $n_0$ and an absolute constant $c>0$ such that for any finite sets $A, B$ of positive reals with $|A|=|B| \geq n_0$,
$$|AB+A| \gg |A|^{\frac{3}{2}+c}.$$
\end{theorem}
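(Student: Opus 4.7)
I would prove Theorem~\ref{thm:maingen} by repeating the proof of Theorem~\ref{thm:main} almost verbatim, systematically replacing the multiplicative scaling set $A$ in the clustering construction by $B$. The point set remains $A\times A$ with slopes indexed by $A/A$, and the fixed representatives $(a_\lambda,\lambda a_\lambda)\in \mathcal{A}_\lambda$ are unchanged; at every place where the original proof uses $\Delta(A)$ in the Balog-type sum I would substitute $\Delta(B)$. Each pair of distinct slopes then produces a sum
$$\mathcal{A}_\lambda+\Delta(B)\cdot (a_{\lambda'},\lambda' a_{\lambda'})\subset (AB+A)\times (AB+A),$$
contributing $\tau|B|$ distinct points per pair. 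The dyadic pigeonholing and clustering arguments transfer with $|BA_\lambda|\approx t_0|B|$ playing the role of $|AA_\lambda|\approx t_0|A|$, and the asymmetric sum-product estimate Theorem~\ref{Shk:assym} is applied with $A\mapsto B$ and $B\mapsto A_\lambda$. Since $|A|=|B|$, the hypothesis is satisfied with the same $k=203$, and each Cauchy-Schwarz error estimate now produces an energy of the form $E_+(B,\alpha A_\lambda)$, to which the conclusion of Theorem~\ref{Shk:assym} applies. The final choice of $M$ and summation over clusters proceed without any further change.

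For the large-ratio-set case $|A/A|\geq |A|^{2-1/24+\epsilon}$, I would apply Theorem~\ref{thm:RSS} to $B$, producing $X,Y\subset B$ with $E_+(X)^3 E_*(Y)\lesssim |A|^{11}$. If $K:=|X|^3/E_+(X)\geq |A|^{1/6-\epsilon}$, then Lemma~\ref{bigratio} applied to $A$ with scaling set $X\subset B$ gives $|AX+AX|\gg |A|^{2+3\epsilon/8}$; the containment $AX\subset AB$ then yields $|AB+AB|\gg |A|^{2+3\epsilon/8}$, and the Ruzsa triangle inequality $|AB+AB|\cdot |A|\leq |AB+A|^2$ closes this subcase. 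If instead $K$ is small then $|YY|\gtrsim |A|^{3/2+3\epsilon}$ with $Y\subset B$, and I would additionally apply Theorem~\ref{thm:RSS} to $A$ to obtain $X',Y'\subset A$ with a corresponding quantity $K'$. If $K'$ is also small then $E_*(Y')\lesssim |A|^{5/2-3\epsilon}$, and by the Cauchy-Schwarz bound $E_*(Y',Y)\leq (E_*(Y')E_*(Y))^{1/2}\lesssim |A|^{5/2-3\epsilon}$,
$$|AB|\geq |Y'Y|\geq \frac{|Y'|^2|Y|^2}{E_*(Y',Y)}\gtrsim |A|^{3/2+3\epsilon},$$
which exceeds the target via $|AB+A|\geq |AB|$.

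The main obstacle is the remaining mixed subcase, where $K'$ is large (so $X'\subset A$ has few additive collisions) while $K$ is small (so $Y\subset B$ has few multiplicative collisions). Here the two useful structural properties live on different sets, so neither a direct application of Lemma~\ref{bigratio} nor the Cauchy-Schwarz multiplicative argument above is sufficient. I would approach this by developing an asymmetric version of Lemma~\ref{bigratio} in which the two scaling sets on the two chosen lines of a cluster are taken to be $X'$ and $Y$ respectively, so that the error-term Cauchy-Schwarz produces a joint quantity controlled by both $E_+(X')$ and $E_*(Y)$ --- both of which are small in this regime. Completing this asymmetric variant is the most delicate piece of the modification.
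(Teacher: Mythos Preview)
Your clustering argument and the substitution $\Delta(A)\mapsto\Delta(B)$ are fine and match the paper. The gap is precisely where you say it is --- the mixed subcase $K$ small, $K'$ large --- but you have overcomplicated its resolution. No asymmetric variant of Lemma~\ref{bigratio} is needed.

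The observation you are missing is that ``$K$ small'' does more than make $E_*(Y)$ small: via \eqref{CSclassic} it forces $|B/B|\geq |Y/Y|\geq |Y|^4/E_*(Y)\gtrsim |B|^{2-\epsilon'}$ to be \emph{large}. Now simply apply Lemma~\ref{bigratio} a second time with the roles of the two sets swapped --- take the point set $B\times B$ (so the slope set is $B/B$, which is now large) and the scaling set $X'\subset A$. This yields
\[
|BX'+BX'|\gg |B/B|^{1/2}\,|X'|\,(K')^{1/8}\gg |A|\,|B/B|^{1/2}\,(K')^{1/8},
\]
and since $K'$ is large and $BX'\subset AB$ you get $|AB+AB|\gg |A|^{2+c'}$, after which the Ruzsa triangle inequality $|AB+AB|\cdot|A|\leq |AB+A|^2$ finishes the subcase exactly as before. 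This is what the paper does; it avoids any new lemma by exploiting the symmetry of Lemma~\ref{bigratio} in its two set arguments.
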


The only significant change to the proof comes towards the beginning when dealing with the case when the ratio set $A/A$ is large. First apply Theorem \ref{thm:RSS} for $B$ to get large subsets $X_B$ and $Y_B$ with $E_+(X_B)^3E_*(Y_B) \lesssim |B|^{11}$. As in the proof of Theorem \ref{thm:main}, an application of Lemma \ref{bigratio} and the Ruzsa Triangle Inequality gives the desired bound unless $E_+(X_B) \geq |B|^{3-\epsilon}$. If this is the case then $E_*(Y_B) \leq |B|^{2+\epsilon'}$, and thus $|B/B| \geq |Y_B/Y_B| \geq |B|^{2-\epsilon'}$.

Now apply Theorem \ref{thm:RSS} for $A$ to get large subset $X_A$ and $Y_A$ satisfying the mixed energy bound. Then apply Lemma \ref{bigratio} again to get a lower bound 
$$|X_AB+X_AB| \gg |B/B|^{1/2}|A|(|A|^3/E_+(X_A))^{1/8}.$$ 
Combining this with the Ruzsa Triangle Inequality gives the desired result unless $E_+(X_A) \gg |A|^{3-\epsilon''}$. If this is the case then Theorem \ref{thm:RSS} implies that $E_*(Y_A) \ll |A|^{2+\epsilon'''}$.

We are done unless $E_*(Y_B)$ and $E_*(Y_A)$ are very small. Another application of the Cauchy-Schwarz inequality then gives the desired result because $|AB+A| \geq |AB| \geq |Y_AY_B| \gg |A|^{2-\epsilon'''}$.

We can therefore assume that $\tau \geq |A|^{\epsilon}$. The remainder of the proof of Theorem \ref{thm:maingen} is the essentially identical to that of Theorem \ref{thm:main}, with some obvious modifications. A slightly worse value for the constant $c$ is obtained.

\section{Remarks on the restriction to sets of positive reals}

When constructing an argument based on taking vector sums along pairs of lines through the origin, as was introduced to the sum-product problem in \cite{solymosi}, it is necessary to assume that the set one starts out with consists of only positive real numbers. This is typically not an important restriction, and the same results extend to arbitrary sets of real numbers. For example, if we know that \eqref{soly} holds for any finite set $A$ of positive reals then the bound also holds for an arbitrary finite $A \subset \mathbb R$. Indeed, since at least half of the elements of $A$ are either all positive or all negative, we can identity a subset $A' \subset A$ with $|A'|\gg |A|$ such that all elements of $A'$ have the same sign. If $A'$ consists of positive reals then we immediately obtain 
$$\max \{|A+A|,|AA|\} \geq \max \{|A'+A'|,|A'A'|\} \gg \frac{|A'|^{4/3}}{\log^{1/3}|A'|}\gg \frac{|A|^{4/3}}{\log^{1/3}|A|}.$$
On the other hand, if $A'$ is made up of negative values, we can simply apply \eqref{soly} for the set $-A'$ and obtain the same conclusion. So, in this case, the result generalises to arbitrary sets of reals, with only a slight weakening of the multiplicative constants.

Unfortunately, the situation is not as straightforward in the main result of this paper, and the condition that the set $A$ of consists of only positive reals is a more meaningful restriction. This is because the problem is not dilation invariant. Indeed, if we have a set $A$ of strictly negative real numbers and apply Theorem \ref{thm:main} to the set $-A$, we deduce that $|AA-A| \gg |A|^{\frac{3}{2}+c}$. Analogous results can be obtained for sets which contain a positive proportion of either positive or negative elements, but one cannot immediately extend Theorem \ref{thm:main} to arbitrary sets of real numbers.

These remarks are summarised in the form of the following theorem:
\begin{theorem} There is an integer $n_0$ and an absolute constant $c>0$ such that the following statement is true.
Let $A$ be a finite set of real numbers with $|A| \geq n_0$. If a positive proportion of elements of $A$ are positive, then
\begin{equation}
|AA+A| \gg |A|^{\frac{3}{2}+c}
\label{plus}
\end{equation}
If a positive proportion of elements of $A$ are negative, then
\begin{equation}
|AA-A| \gg |A|^{\frac{3}{2}+c}.
\label{minus}
\end{equation}
In particular, for any finite set of real numbers $A$ with $|A| \geq n_0$, at least one of \eqref{plus} and \eqref{minus} holds.
\end{theorem}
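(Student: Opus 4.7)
The plan is to reduce both \eqref{plus} and \eqref{minus} to Theorem \ref{thm:main} by extracting a subset of $A$ of uniform sign and, in the negative case, applying the sign-flip symmetry $A \leftrightarrow -A$. For \eqref{plus}, let $A^{+} := A \cap (0,\infty)$ and use the hypothesis $|A^{+}| \geq \delta |A|$ for some absolute $\delta>0$ (which is the quantitative meaning of ``positive proportion''). Provided we take the threshold $n_0$ here slightly larger than the one appearing in Theorem \ref{thm:main}, so that $\delta n_0$ still exceeds that threshold, the theorem applied to $A^{+}$ yields $|A^{+}A^{+} + A^{+}| \gg |A^{+}|^{3/2+c}$. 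Combining this with the trivial inclusion $A^{+}A^{+} + A^{+} \subseteq AA+A$ and the lower bound $|A^{+}| \gg |A|$ delivers \eqref{plus}, after absorbing the factor $\delta^{3/2+c}$ into the implied constant.

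For \eqref{minus}, I would let $A^{-} := A \cap (-\infty,0)$ with $|A^{-}| \geq \delta |A|$, and consider the set $B := -A^{-}$ of positive reals. Theorem \ref{thm:main} applied to $B$ gives $|BB + B| \gg |B|^{3/2+c}$. The key observation, which accounts for the switch from $+$ to $-$, is the identity
\[
BB + B \;=\; A^{-}A^{-} - A^{-} \;\subseteq\; AA - A,
\]
which holds because $(-x)(-y) = xy$ (so $BB = A^{-}A^{-}$) and $B = -A^{-}$ (so adding $B$ is the same operation as subtracting $A^{-}$). Since $|B| = |A^{-}| \gg |A|$, this yields \eqref{minus}.

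Finally, the ``in particular'' clause follows by the pigeonhole observation
\[
|A \cap (0,\infty)| + |A \cap (-\infty,0)| \geq |A| - 1,
\]
so at least one of the two parts has size at least $(|A|-1)/2 \geq |A|/3$ once $|A| \geq 3$, which triggers one of the preceding two hypotheses with $\delta = 1/3$. I expect no substantive obstacle here: the entire argument is sign-flip symmetry combined with the monotonicity of $|AA+A|$ and $|AA-A|$ under taking subsets of $A$, and the only care required is a minor bookkeeping adjustment to $n_0$ and to the multiplicative constant to account for the passage from $A$ to a subset of size at least $|A|/3$.
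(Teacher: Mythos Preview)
Your proposal is correct and follows essentially the same approach as the paper: the paper likewise reduces to Theorem~\ref{thm:main} by passing to the positive (resp.\ negative) part of $A$ and, in the negative case, applying the sign-flip $A\mapsto -A$ to convert $BB+B$ into $AA-A$. Your write-up is in fact slightly more explicit than the paper's, which presents this reduction only as brief remarks preceding the theorem statement.
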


\section{Proof of Theorem \ref{thm:construction}}

We construct a set $A$ of integers such that $|A| =n$ and $|AA+m A|$ has cardinality
\[ O \left( n^2 (\log \log n )^{1- 2 \log 2 +o(1)} \right) . \]
for some integer $m$. This will complete the proof, since we can then take $B=m^{-1}A$ and check that $|BB+B|=|AA+mA|$.

We take a $q$ that is the product of primes up to a limit,
 \[ q = \prod_{p<y} p ,\]
where $y$ is taken so that $q^2<n$. By the prime number theorem we have $ y \sim (\log n)/2$. We put $m=q^2$.

Our set is defined via the "additive"\, function
 \[ f(x) = \sum_{p|x, p<y} 1 .\]
 This is periodic with period $q$. By our choice of the parameters in each interval of length $q$ its average is

  \[ \sum_{p<y} \frac{1}{p} = \log\log y + O(1) , \]
and its variance is
\[ \sum_{p<y}  \frac{1}{p} \left(1-  \frac{1}{p} \right)  = \log\log y + O(1) , \]
so by Chebyshev's inequality in each interval of length $q$ at least $q/2$ integers satisfy
 \[ f(x) > \log\log y - 2 \sqrt{\log\log y} .\]
Our set $A$ will be the collection of the first $n$ such numbers. By the above observation it will be
contained in at most $\lceil 2n/q \rceil $ blocks, hence $A\subset[1, 3n]$.

To describe the structure of $AA$ we introduce another additive function $g$, defined on primes $p<y$
by $g(p)=1$, $g(p^j)=2$ for $j>1$, and 0 on powers of greater primes. This has the property that
 \[ g(ab) \geq f(a) + f(b) \]
 for all integers $a,b$, consequently
 \[ g(x) > 2 \log\log y - 4 \sqrt{\log\log y} \]
for all $x\in AA$ .

This function $g$ is periodic with period $q^2$. To estimate the number of large values we calculate an exponential moment.
The average of $2^g$ in each block of length $q^2$ is exactly
 \[ \prod_{p<y} \left( 1 + \frac{1}{p}+\frac{2}{p^2} \right) \sim c \log y. \]
(To see this, observe that in every block $g$ is \emph{exactly} a sum of independent random variables, corresponding
to primes $p<y$ and assuming the values $0,1,2$ with probability $1-1/p, 1/p-1/p^2$ and $1/p^2$, respectively.
Hence $2^g$ is the product of variables assuming $1,2,4$ with the above probabilities.)
Consequently the proportion of residue classes modulo $q^2$ that intersect $AA$ can be estimated from above by
 \[  c  2^{   - 2 \log\log y + 4 \sqrt{\log\log y}} \log y  = (\log y)^{1- 2 \log 2 +o(1)}     = (\log \log n )^{1- 2 \log 2 +o(1)} .     \]
The same estimate holds for the  residue classes intersecting $AA+q^2 A$. As this set is contained in $[1, 10n^2]$,
its cardinality is
 \[ O \left( n^2 (\log \log n )^{1- 2 \log 2 +o(1)} \right) . \]

\section*{Acknowledgements} Oliver Roche-Newton was supported by the Austrian Science Fund (FWF): Project F5511-N26, which is part of the Special Research Program ``Quasi-Monte Carlo Methods: Theory and Applications" as well as by FWF Project P 30405-N32. Imre Z. Ruzsa was supported by ERC--AdG Grant No.321104  and Hungarian National Foundation for Scientific Research (OTKA), Grants No.109789 and NK104183. Chun-Yen Shen was supported by MOST, through grant 104-2628-M-002-015-MY4. 
Ilya Shkredov was supported in part by 
the Program of the Presidium of the Russian Academy of Sciences~01 ``Fundamental Mathematics and its Application" under grant PRAS-18-01.

We are grateful to Antal Balog, Brandon Hanson, Brendan Murphy, Friedrich Pillichshammer, Misha Rudnev, George Shakan and Dmitry Zhelezov for various helpful conversations and advice.

\end{document}